\numberwithin{equation}{section}
\numberwithin{figure}{section}
\theoremstyle{plain}
  \theoremstyle{plain}
\theoremstyle{definition} 
\newtheorem*{remark}{Remark}
\theoremstyle{plain}  
\newtheorem{theorem}{Theorem}[section]
\newtheorem{proposition}[theorem]{Proposition}
\newtheorem{corollary}[theorem]{Corollary}
\newtheorem{lemma}[theorem]{Lemma}
\DeclareMathOperator{\id}{id}
\providecommand{\lemmaname}{Lemma}
\providecommand{\theoremname}{Theorem}
\begin{document}

\title{Weighted cogrowth formula for free groups}

\author{Johannes Jaerisch}
\address{Department of Mathematics, Interdisciplinary Faculty of Science and Engineering,\endgraf 
Shimane University,\endgraf
Matsue, Shimane 690-8504, Japan}
\email{jaerisch@riko.shimane-u.ac.jp}

\author{Katsuhiko Matsuzaki}
\address{Department of Mathematics, School of Education, Waseda University,\endgraf
Shinjuku, Tokyo 169-8050, Japan}
\email{matsuzak@waseda.jp}

\subjclass[2010]{Primary 20E08, 20F65; Secondary 60J15, 60B15}
\keywords{}
\thanks{This work was supported by JSPS KAKENHI 16K13767.}

\begin{abstract} We investigate the relationship between geometric, analytic and probabilistic indices for  quotients of  the Cayley graph of the free group  ${\rm Cay}(F_n)$ 
 by an arbitrary subgroup $G$ of $F_n$. 
Our main result, which generalizes Grigorchuk's cogrowth formula   to  variable edge lengths, provides a formula relating the bottom of the spectrum of weighted  Laplacian on $G \backslash {\rm Cay}(F_n)$ to  the Poincar\'e exponent of $G$.  Our main tool is the  Patterson-Sullivan theory for Cayley graphs with variable edge lengths. 
\end{abstract}

\maketitle

\section{Introduction and statement of results}

Let  $F_n=\langle a_1,\ldots,a_n\rangle$ denote the free group of rank $n\ge 2$ and let ${\rm Cay}(F_n)$ denote its Cayley graph. For an arbitrary  subgroup $G \subset F_n$,   the action of $G$ on   ${\rm Cay}(F_n)$ defines the  quotient graph $G \backslash {\rm Cay}(F_n)$.
In this paper, we compare  fundamental indices of geometric, analytic and probabilistic  nature associated with $G$ acting on ${\rm Cay}(F_n)$. The geometric index is the Poincar\'e exponent $\delta_G$ given by   the exponential growth rate
of  $G$-orbits 
$$
\delta_G=\limsup_{R \to \infty} \frac{\log \#\{g \in G \mid d({\rm id}, g) \leq R\}}{R},
$$
where  $d$ denotes the metric on  $F_n$ giving each  edge of ${\rm Cay}(F_n)$ the length one. 
The analytic  index is the bottom of the spectrum of the Laplacian $\Delta=I-A$ on
$L^2(G \backslash {\rm Cay}(F_n))$  denoted by $\lambda_0^G$. 
Here,  $I$ denotes the identity matrix and $A$ the  transition matrix  of the simple random walk on ${\rm Cay}(F_n)$, which is   for  each  function $f$ on the vertex set of $G \backslash {\rm Cay}(F_n)$ given by
$$
(Af)(x)=\frac{1}{2n}\{f(xa_1)+f(xa_1^{-1})+\cdots +f(xa_n)+f(xa_n^{-1})\} \quad (x \in G \backslash {\rm Cay}(F_n)).
$$

 The  two indices, geometric and analytic,  are related by the following well-known formula. Note that the edge lengths of ${\rm Cay}(F_n)$ and the weights of $A$ are constant.
\begin{theorem}[Grigorchuk's cogrowth formula \cite{MR599539, MR1436550}]\label{cogrowth}
\[
\lambda_0^G=
\begin{cases} \frac{1}{2n}\,(2n-1-e^{\delta_G})(1-e^{-\delta_G}) & (\delta_G > \frac{1}{2}\log (2n-1))\\
1-\frac{\sqrt{2n-1}}{2} & (\delta_G \leq \frac{1}{2}\log (2n-1))
\end{cases}.
\]
\end{theorem}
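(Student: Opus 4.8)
The plan is to convert $\lambda_0^G$ into the exponential growth rate of a counting sequence and then recover $\delta_G$ by a singularity analysis of generating functions. Since $\Delta=I-A$ with $A$ self-adjoint on $L^2(G\backslash{\rm Cay}(F_n))$, the bottom of the $L^2$-spectrum equals $\lambda_0^G=1-\rho_G$, where $\rho_G:=\|A\|=\limsup_{m\to\infty}p^{(m)}(o,o)^{1/m}$ is the $L^2$-spectral radius of the simple random walk on the Schreier graph $X:=G\backslash{\rm Cay}(F_n)$, with $o:=G\cdot\id$ the base vertex (a standard fact for random-walk Laplacians; $\rho_G$ lies in the spectrum of $A$). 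If $b_m$ denotes the number of words of length $m$ in $\{a_1^{\pm1},\dots,a_n^{\pm1}\}$ that represent an element of $G\subset F_n$, then $p^{(m)}(o,o)=b_m/(2n)^m$, so $\rho_G=\tfrac1{2n}\limsup_m b_m^{1/m}$, and the theorem reduces to determining this growth rate in terms of $\delta_G$.

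I would next establish the classical identity relating $b_m$ to the cogrowth sequence $a_k:=\#\{g\in G\mid |g|=k\}$, which satisfies $\delta_G=\limsup_k\tfrac1k\log a_k$. Lifting a closed walk of length $m$ at $o$ in $X$ to a walk issued from $\id$ in the $2n$-regular tree $T:={\rm Cay}(F_n)$, which is possible and unique since $G$ acts freely, one reaches a vertex $g\in G$, and the number of such walks depends on $g$ only through $|g|$; hence $b_m=\sum_{g\in G}N(m,|g|)=\sum_{k\ge0}a_kN(m,k)$, where $N(m,k)$ counts the length-$m$ walks between two vertices at distance $k$ in $T$. With $q:=2n-1$, the generating functions over $m$ are the standard tree ones, $\sum_mN(m,k)t^m=\mathcal G(t)F(t)^k$, where $\mathcal G$ is the return generating function of $T$ and $F(t)=\bigl(1-\sqrt{1-4qt^2}\bigr)/(2qt)$ is the single-edge first-passage series, both analytic on $|t|<(2\sqrt q)^{-1}$ and singular there. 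Summing over $k$ yields the key identity
\[
\sum_{m\ge0}b_mt^m=\mathcal G(t)\,\Phi\!\bigl(F(t)\bigr),\qquad \Phi(s):=\sum_{k\ge0}a_ks^k,
\]
in which $\Phi$ has radius of convergence $e^{-\delta_G}$.

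The core of the argument is the singularity analysis of the right-hand side: if $r$ is its smallest positive singularity, then $\rho_G=(2nr)^{-1}$ by Cauchy--Hadamard. Since $F$ maps $[0,(2\sqrt q)^{-1})$ strictly increasingly onto $[0,q^{-1/2})$ with $F'>0$ there, two regimes occur. If $\delta_G\le\tfrac12\log(2n-1)$, then $e^{-\delta_G}\ge q^{-1/2}$ and $F(t)$ never leaves the disc of convergence of $\Phi$, so $\Phi\circ F$ is analytic on $|t|<(2\sqrt q)^{-1}$, the dominant singularity is that of $\mathcal G$ at $(2\sqrt q)^{-1}$, and $\rho_G=\sqrt{2n-1}/n$; this value is moreover forced from below by the elementary bound $b_m\ge N(m,0)$. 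If $\delta_G>\tfrac12\log(2n-1)$, there is a unique $t^*\in(0,(2\sqrt q)^{-1})$ with $F(t^*)=e^{-\delta_G}$; as $\Phi$ has nonnegative coefficients it is singular at its radius of convergence (Pringsheim), and $F$ being a local biholomorphism at $t^*$, the composite $\Phi\circ F$, hence also its product with the nonvanishing analytic factor $\mathcal G$, is singular exactly at $t^*$. One computes $t^*=e^{\delta_G}/(e^{2\delta_G}+2n-1)$, whence $\rho_G=\tfrac1{2n}\bigl(e^{\delta_G}+(2n-1)e^{-\delta_G}\bigr)$ and $\lambda_0^G=1-\rho_G=\tfrac1{2n}(2n-1-e^{\delta_G})(1-e^{-\delta_G})$.

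The main obstacle I anticipate is precisely this case distinction: showing that when $\delta_G\le\tfrac12\log(2n-1)$ the spectral radius stays pinned at the free-group value $\sqrt{2n-1}/n$, independently of the cogrowth series. This requires a careful determination of which factor of $\mathcal G(t)\Phi(F(t))$ carries the dominant singularity, and in the fast-growth regime it requires discarding the spurious root produced when $F(t^*)=e^{-\delta_G}$ is solved by squaring (the correct root being admissible only for $\delta_G>\tfrac12\log(2n-1)$). An alternative that adapts to the variable-edge-length setting of this paper is to construct a Patterson--Sullivan density of dimension $\delta_G$ on the boundary $\partial F_n$, use it to build a positive $\rho_G$-harmonic function on $X$, and invoke the associated characterization of the bottom of the spectrum; this bypasses the explicit tree generating functions at the cost of the conformal-measure machinery developed below.
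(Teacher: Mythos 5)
Your proof is correct, and it follows the classical generating-function route due to Grigorchuk, which is genuinely different from what the paper does. The paper never proves Theorem \ref{cogrowth} directly: it develops Patterson--Sullivan theory on the metric graph $X_{\bf r}$, proves the weighted generalization Theorem \ref{main3} via Theorems \ref{firstline} and \ref{secondline}, and observes at the end of Section~4 that Theorem \ref{cogrowth} is the specialization ${\bf r}=(1/(2n),\ldots,1/(2n))$. There the positive eigenfunction of $\Delta_{\bf p}$ with eigenvalue $\lambda\circ H({\bf r},\delta_G({\bf r}))$ is the total mass $\varphi_\mu(x)=\int_{\partial X_{\bf r}}j(x,\xi)^{\delta_G({\bf r})}\,d\mu_o(\xi)$ of a $G$-invariant conformal density; the inequality $\lambda\circ H\le\lambda_0^G$ is the maximum characterization of $\lambda_0^G$ by positive eigenfunctions, and the reverse inequality in the supercritical range is obtained by proving $\varphi_\mu\in L^2(G\backslash{\rm Cay}(F_n))$ (a core-plus-trees decomposition for convex-cocompact $G$, then exhaustion for general $G$). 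Your combinatorial route --- the tree return and first-passage series, the factorization $\sum_m b_mt^m=\mathcal G(t)\,\Phi(F(t))$, Pringsheim and Cauchy--Hadamard --- is more elementary and also produces the Schreier-graph Green function explicitly; but, as you note yourself, it leans on the homogeneity of the equal-edge-length tree, and the single-variable composition $\Phi\circ F$ has no clean analogue once each generator carries its own length $-\log r_i$ and weight $p_i$, which is exactly what the conformal-measure formalism is designed to absorb. Two small remarks. The claimed radius $e^{-\delta_G}$ for $\Phi$ fails when $G$ is finite (the radius is then $+\infty$ while $\delta_G=0$), but this only strengthens the argument since $\Phi\circ F$ is then analytic on all of $|t|<(2\sqrt{2n-1})^{-1}$; one should also record $|F(t)|\le F(|t|)$ from positivity of coefficients to see that $\Phi\circ F$ is analytic on the full complex disc $|t|<t^*$ and not merely on the real segment. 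Finally, your subcritical answer $\lambda_0=1-\sqrt{2n-1}/n$ is the correct one, and it exposes a typo in the paper's statement of Theorem \ref{cogrowth} (and in the sentence immediately following it): the denominator $2$ there should be $n$. The two expressions agree only when $n=2$, as one also checks by continuity at $\delta_G=\tfrac12\log(2n-1)$ or by specializing the paper's own formula (\ref{Woess}) to ${\bf p}=(1/(2n),\ldots,1/(2n))$.
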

That $\lambda_0:=\lambda_0^{\{\rm id\}}=1-{\sqrt{2n-1}}/{2}$ follows from earlier work of Kesten (\cite{MR0109367}) who proved that  the spectral radius of $A$ is equal to the decay rate of the return probabilities of the simple random walk on ${\rm Cay}(F_n)$.  Also note that $\delta_{F_n}=\log (2n-1)$, so that $\lambda_0$ is related to  $\delta_{F_n}/2$.  
 Related results for discrete groups acting on  hyperbolic space were obtained by
Elstrodt, Patterson and Sullivan in \cite{MR882827}. The case of pinched negative curvature was recently considered in \cite{MR3350111}.

In this paper, we consider the case of
variable edge lengths 
of ${\rm Cay}(F_n)$. 
For any ${\bf r}=(r_1,\ldots,r_n)$ with $r_1+ \cdots +r_n=1/2$ and $r_i>0$ for all $i$,
we define the length of the edge corresponding to the generator $a_i^{\pm}$
to be $-\log r_i$ for all $i$.
The Cayley graph
${\rm Cay}(F_n)$ equipped with this distance $d_{\bf r}$ is denoted by $X_{\bf r}$.

Any subgroup of $G \subset F_n$ acts on $X_{\bf r}$ isometrically, properly discontinuously, and freely.
The Poincar\'e exponent $\delta_G({\bf r})$ of $G$ acting on $X_{\bf r}$ is defined in the same manner.
In our normalization of the edge length, the even length case with $r_i=1/(2n)$ for all $i$ gives 
$\delta_{F_n}({\bf r})=\log (2n-1)/\log (2n)$. 
Unlike the case of equal edge lengths, even in the special case $G=F_n$, the value of  $\delta({\bf r}):=\delta_{F_n}({\bf r})$   
is unclear  in the variable edge length setting, since it is not easy to 
count $\#\{g \in F_n \mid d_{\bf r}({\rm id}, g) \leq R\}$ directly. We will consider the problem to compute $\delta({\bf r})$ in Theorem \ref{main1} below. 

We also consider variable weights for the discrete  Laplacian. 
For every ${\bf p}=(p_1,\ldots,p_n)$ with $p_1+ \cdots +p_n=1/2$ and $p_i>0$ for all $i$,
the stochastic transition matrix $A_{\bf p}=(p(x,y))_{x,y}$ for vertices $x,y \in F_n$ of ${\rm Cay}(F_n)$ is given by
$p(x,y)=p_i$ if $y=xa_i^{\pm}$. This defines an operator which is, for each function $f$ on the vertex set of ${\rm Cay}(F_n)$, given by 
\[
(A_{\bf p}f)(x):=\sum_{i=1}^n p_i(f(xa_i)+f(xa_i^{-1})).
\]
The weighted Laplacian is then defined by $\Delta_{\bf p}:=I-A_{\bf p}$. 

For a subgroup $G \subset F_n$, the Laplacian $\Delta_{\bf p}$ acts on $L^2 (G \backslash {\rm Cay}(F_n))$
as a bounded symmetric operator. 
The bottom of the spectrum  of $\Delta_{\bf p}$ is denoted by $\lambda_0^G({\bf p})$.
Since $A_{\bf p}$ is also a bounded symmetric operator with non-negative entries,
the spectral radius $\rho^G({\bf p})$
of $A_{\bf p}$ coincides with its operator norm, and this is also given by
\[
\rho^G({\bf p})=\sup \langle A_{\bf p} f,f \rangle,
\]
where $\langle \cdot, \cdot \rangle$ is the inner product of $L^2 (G \backslash {\rm Cay}(F_n))$ and
the supremum is taken over all $f \in L^2 (G \backslash {\rm Cay}(F_n))$ with $\langle f, f \rangle=1$.
Then, we have that
\[
\lambda_0^G({\bf p})=1-\rho^G({\bf p})=\inf \langle \Delta_{\bf p} f,f \rangle.
\]
It is easy to see that $\rho^{F_n}({\bf p})=1$ and 
$\lambda_0^{F_n}({\bf p})=0$ for every ${\bf p}$.

Concerning $\rho({\bf p}):=\rho^{\{\rm id\}}({\bf p})$ and 
$\lambda_0({\bf p}):=\lambda_0^{\{\rm id\}}({\bf p})=1-\rho({\bf p})$,
 the following formula is well-known:  
 \begin{equation}\label{Woess}
 \rho({\bf p})=\min_{t>0} \frac{1}{t} \left\{\sum_{i=1}^n \sqrt{1+4p_i^2t^2}-(n-1)\right\}.
 \end{equation}
 The formula (\ref{Woess}) is a special case of  \cite{MR0442698}. The case $n=2$ was considered in \cite{MR0461671}. Further references can be found in \cite{MR1743100}.  See Section 9 and the Notes at the end of  Chapter II of this book for details. We will also obtain this formula in the course of our arguments.  Moreover, we will express $\rho({\bf p})$ in a different way by using the Poincar\'e exponent of $F_n$  (see Theorem \ref{main2} below).

We investigate the problems mentioned above for the variable parameters.
Our method is to find the proper correspondence between the edge length parameter $\bf r$
and the weight $\bf p$ for the Laplacian. To obtain eigenfunctions of the Laplacian $\Delta_{\bf p}$, we use an integral representation by the Patterson measure instead of the integral of the Martin kernel. An idea of choosing weights of the Laplacian from Patterson measures can be found in \cite{MR1425084}.

Since ${\rm Cay}(F_n)$ is a tree, $X_{\bf r}=({\rm Cay}(F_n),d_{\bf r})$ is a Gromov $0$-hyperbolic space. 
Given a boundary point $\xi \in \partial X_{\bf r}$, 
we define $j_{\bf r}(x,\xi)=\exp(-b_{\xi}(x))$ for every vertex $x \in X_{\bf r}$, where $b_{\xi}$ is the Busemann
function with respect to the geodesic ray $\beta_{\xi}:[0,\infty) \to X_{\bf r}$
from the base point $o=\beta_{\xi}(0)$ to $\xi=\beta_{\xi}(\infty)$ given by
$$
b_{\xi}(x)=\lim_{t \to \infty}(t-d_{\bf r}(x,\beta_{\xi}(t)).
$$ 
For the Laplacian $\Delta_{\bf p}$ of weight $\bf p$, the eigenrelation
\begin{equation}\label{fundamental}
\Delta_{\bf p} j_{\bf r}(x,\xi)^s=\lambda j_{\bf r}(x,\xi)^s \quad (\forall \xi \in \partial X_{\bf r})
\end{equation}
with $\lambda \in \mathbb R$ and $s \in (0,1)$ gives the correspondence between $\bf r$ and $\bf p$.
This can be explicitly given in the following way.

We set the spaces of parameters
\begin{align*}
\mathcal R&:=\{{\bf r}=(r_1,\ldots,r_n) \mid r_1+\cdots +r_n=1/2,\ r_i>0\ (\forall i)\};\\
\mathcal P&:=\{{\bf p}=(p_1,\ldots,p_n) \mid p_1+\cdots +p_n=1/2,\ p_i>0\ (\forall i)\}.
\end{align*}
We also define a diffeomorphism $H:\mathcal R \times (0,\infty) \to (0,1)^n$ by $H({\bf r},s)={\bf u}:=(u_1,\ldots,u_n)$, 
$u_i=r_i^s$. Under this transformation, relation (\ref{fundamental}) turns out to be
\begin{equation}
\lambda=1-2\sum_{k=1}^nu_{k}p_{k}-(u_{i}^{-1}-u_{i})p_{i} \quad (i=1,\ldots,n).
\end{equation}
Solving these equations for unknown variables ${\bf p}=(p_1,\ldots,p_n)$ and $\lambda$ by linear algebra, we have functions
${\bf p}({\bf u})$ and $\lambda({\bf u})$ if the determinant is not zero. On the other hand, given $\bf p \in \mathcal P$ and 
$\lambda \geq 0$,
we can obtain a solution 
${\bf u} \in (0,1)^n$ by using the Green function of the random walk on ${\rm Cay}(F_n)$ if $\lambda \leq \lambda_0({\bf p})$.

The following theorem, which will be proved in Section 2,  allows us to compute the Poincar\'e exponent.

\begin{theorem}\label{main1}
For every ${\bf r}=(r_1,\ldots,r_n) \in {\mathcal R}$, the Poincar\'e exponent $\delta({\bf r})$ of $F_n$ satisfies
the equation $\lambda \circ H({\bf r},s)=0$ for $s=\delta({\bf r})$. More precisely, $\delta({\bf r})$ is
the unique solution $s \in (0,1)$ of the equation
\[
\sum_{i}r_{i}^s+3\sum_{(i_{1},i_{2})}(r_{i_{1}}r_{i_{2}})^s+5\sum_{(i_{1},i_{2},i_{3})}(r_{i_{1}}r_{i_{2}}r_{i_{3}})^s+\cdots+(2n-1)(r_{1}\cdots r_{n})^s=1,
\]
where the subscript $(i_1,\ldots,i_m)$ represents taking all indices satisfying $i_1< \cdots <i_m$.
\end{theorem}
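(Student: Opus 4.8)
The plan is to compute the Poincaré exponent $\delta(\mathbf r)$ directly from the Poincaré series of $F_n$ acting on $X_{\mathbf r}$, obtain a closed formula for that series, read off the equation it determines, and finally check that this equation is $\lambda\circ H(\mathbf r,s)=0$. Recall first that each $g\in F_n$ is a reduced word and that $d_{\mathbf r}(\mathrm{id},g)$ is the sum of the lengths $-\log r_i$ over its letters, so $e^{-s\,d_{\mathbf r}(\mathrm{id},g)}=\prod_{i=1}^n u_i^{n_i(g)}$, where $n_i(g)$ counts the occurrences of $a_i^{\pm1}$ in $g$ and $\mathbf u=(u_1,\dots,u_n)=H(\mathbf r,s)$. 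Hence the Poincaré series is $\mathcal P_s(\mathbf r)=\sum_{g\in F_n}\prod_i u_i^{n_i(g)}=:P(\mathbf u)$, and, by the standard relation between a Poincaré series and the exponential growth rate of the orbit, $\delta(\mathbf r)$ is the abscissa of convergence of $s\mapsto\mathcal P_s(\mathbf r)$.

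To evaluate $P(\mathbf u)$ I would use the tree structure of $\mathrm{Cay}(F_n)$. Grouping reduced words according to their last letter and exploiting the symmetry $a_i\leftrightarrow a_i^{-1}$ of the weights, one introduces $n$ auxiliary series $z_1,\dots,z_n$, where $z_i$ is the weighted sum over the reduced words $w$ for which $a_i w$ is again reduced; these satisfy $z_i=1+u_iz_i+2\sum_{j\ne i}u_jz_j$ and $P(\mathbf u)=1+2\sum_i u_iz_i$. Substituting $v_i=u_iz_i$, summing over $i$, and eliminating gives the closed form
\[
P(\mathbf u)=\Bigl(1-2\sum_{i=1}^n\tfrac{u_i}{1+u_i}\Bigr)^{-1}.
\]
Thus $P(\mathbf u)$ is finite precisely when $2\sum_i u_i/(1+u_i)<1$. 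Since $s\mapsto 2\sum_i r_i^s/(1+r_i^s)$ is strictly decreasing, equals $n>1$ at $s=0$, is strictly less than $2\sum_i r_i=1$ at $s=1$, and tends to $0$ as $s\to\infty$, there is a unique $s\in(0,1)$ solving $2\sum_i r_i^s/(1+r_i^s)=1$, and it equals $\delta(\mathbf r)$.

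It remains to rewrite this equation and to identify it with $\lambda\circ H(\mathbf r,s)=0$. Clearing denominators and using $\sum_i u_i\prod_{j\ne i}(1+u_j)=\sum_{m=1}^n m\,e_m(\mathbf u)$, with $e_m$ the $m$-th elementary symmetric polynomial, turns $2\sum_i u_i/(1+u_i)=1$ into $\sum_{m=1}^n(2m-1)e_m(\mathbf u)=1$, which in the variables $u_i=r_i^s$ is exactly the displayed equation. For the identification, set $\lambda=0$ in the transformed eigenrelation $\lambda=1-2\sum_k u_kp_k-(u_i^{-1}-u_i)p_i$: this forces $(u_i^{-1}-u_i)p_i$ to be independent of $i$, so $p_i$ is proportional to $u_i/(1-u_i^2)$, and imposing the normalization $\sum_ip_i=1/2$ alongside the relation defining the proportionality constant yields, upon division, precisely $2\sum_i u_i/(1+u_i)=1$. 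Hence $\lambda\circ H(\mathbf r,s)=0$ at $s=\delta(\mathbf r)$.

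The step I expect to be the main obstacle is making the generating-function calculation rigorous rather than merely formal: one must verify that the linear system for the $z_i$ holds wherever the series converges and, conversely, that $P(\mathbf u)$ is actually finite on the whole region $2\sum_i u_i/(1+u_i)<1$ — which can be done by a monotone truncation argument or by bounding the spectral radius of the $2n\times 2n$ transfer matrix that records the last letter — together with the sign and inverse bookkeeping in the tree recursion. Everything else reduces to elementary algebra and the standard dictionary relating Poincaré series to orbit growth.
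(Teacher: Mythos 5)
Your proof is correct, but it takes a genuinely different route from the paper. The paper derives the statement from Patterson--Sullivan theory: it invokes the Patterson measure for $F_n$ (Theorem~\ref{patterson}), observes that its total mass function is constant by transitivity (Proposition~\ref{totalmassfunction is constant}), and uses a sign argument (Lemma~\ref{delta is zero}) to pin $\delta({\bf r})$ between the zeros $s_i({\bf r},{\bf p})$ of the directional quantities $c_i$; solving $c_1=\cdots=c_n=0$ then yields $l({\bf u})=0$ with ${\bf u}=H({\bf r},\delta({\bf r}))$. You instead bypass conformal measures entirely, computing the Poincar\'e series $\mathcal{P}_s({\bf r})=\sum_{g\in F_n}e^{-s\,d_{\bf r}({\rm id},g)}=P({\bf u})$ by a transfer recursion on the tree and obtaining the closed form $P({\bf u})=\bigl(1-2\sum_i u_i/(1+u_i)\bigr)^{-1}$; since the abscissa of convergence of the Poincar\'e series equals $\delta({\bf r})$, this immediately identifies $\delta({\bf r})$ as the unique $s$ with $2\sum_i r_i^s/(1+r_i^s)=1$, which after clearing denominators and using $\sum_i u_i\prod_{j\ne i}(1+u_j)=\sum_m m\,e_m({\bf u})$ is exactly the paper's $l({\bf u})=0$ and the displayed polynomial equation. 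Your final reduction of $\lambda\circ H=0$ to the same equation reproduces the content of Lemma~\ref{newformulation} and the factorization of $\lambda({\bf u})$. The one point you flag as needing care --- rigor of the generating-function calculation --- is indeed the only gap, and your monotone-truncation suggestion does close it: the truncations $z_i^{(N)}$ increase and, when $2T<1$ with $T=\sum_i u_i/(1+u_i)$, stay below the fixed point $(1+2S^*)/(1+u_i)$ with $S^*=T/(1-2T)$ (the base case $1\le(1+2S^*)/(1+u_i)$ reduces to $u_i/(1+u_i)\le 2T$, which holds since $T\ge u_i/(1+u_i)>0$). Your route is more elementary and self-contained for $G=F_n$; the paper's route via Patterson measures is the one that generalizes to arbitrary subgroups $G\subset F_n$ in Sections~3 and~4, where no explicit orbit generating function is available.
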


Related to the  formula in Theorem \ref{Woess}, we will prove the following result in Section 3. 
A novelty of our result is that we determine the minimum in (\ref{Woess}) 
by using the Poincar\'e exponent of $F_n$ acting on the weighted Cayley graph.

\begin{theorem}\label{main2}
To each ${\bf p} \in {\mathcal P}$, there corresponds a unique ${\bf r} \in \mathcal R$
such that the bottom of  the spectrum of  $\lambda_0(\bf p)$
of the Laplacian $\Delta_{\bf p}$ on ${\rm Cay}(F_n)$ is given by $\lambda \circ H({\bf r},\delta({\bf r})/2)$.
\end{theorem}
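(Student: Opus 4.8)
The plan is to establish a bijective correspondence between the weight parameters $\mathcal P$ and the edge-length parameters $\mathcal R$ by solving the eigenrelation \eqref{fundamental} at the specific spectral value $\lambda = \lambda_0(\mathbf p)$. First I would recall from the discussion preceding Theorem \ref{main1} that, under the transformation $H$, the eigenrelation becomes the system
\begin{equation*}
\lambda = 1 - 2\sum_{k=1}^n u_k p_k - (u_i^{-1} - u_i) p_i \quad (i = 1,\ldots,n),
\end{equation*}
and that this system can be solved in two directions: given $\mathbf u \in (0,1)^n$ one recovers $\mathbf p(\mathbf u)$ and $\lambda(\mathbf u)$ by linear algebra (when the relevant determinant is nonzero), and conversely, given $\mathbf p \in \mathcal P$ and $0 \le \lambda \le \lambda_0(\mathbf p)$, one obtains a solution $\mathbf u \in (0,1)^n$ via the Green function of the weighted random walk on ${\rm Cay}(F_n)$. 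The key observation is that at the critical value $\lambda = \lambda_0(\mathbf p)$, the $s$-parameter hidden in $\mathbf u = H(\mathbf r, s)$ must be exactly $\delta(\mathbf r)/2$. This is the analogue, in the weighted tree setting, of the classical fact that the bottom of the spectrum is attained by the square root of the Green/Martin kernel at the critical exponent; it is why the exponent $\delta(\mathbf r)/2$ appears, in parallel with the relation $\lambda_0 = \delta_{F_n}/2$ noted after Theorem \ref{cogrowth}.

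The main steps, in order, are as follows. First I would fix $\mathbf p \in \mathcal P$ and use the Green function $\mathcal G(x,y \mid z) = \sum_{k\ge 0} p^{(k)}(x,y) z^k$ of the $\mathbf p$-random walk, evaluated at its radius of convergence $z = 1/\rho(\mathbf p)$, to produce a positive $\Delta_{\mathbf p}$-harmonic-type function realizing $\lambda_0(\mathbf p) = 1 - \rho(\mathbf p)$. On the tree ${\rm Cay}(F_n)$ this Green function factors multiplicatively along geodesics, so the associated function is of the form $x \mapsto \prod u_i^{|x|_i}$ for suitable $u_i \in (0,1)$ depending only on the last-step direction statistics; writing $u_i = F_i(z)$ where $F_i$ is the first-passage generating function in direction $a_i$, one gets an explicit candidate $\mathbf u$. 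Second, I would verify that this $\mathbf u$ solves the transformed system above with $\lambda = \lambda_0(\mathbf p)$: this is a direct substitution using the first-passage decomposition $F_i = p_i z(1 + \sum_{j} F_j F_i^{?})\cdots$, which is exactly the computation that also yields formula \eqref{Woess} for $\rho(\mathbf p)$ (so I would derive \eqref{Woess} here as a by-product, as promised in the text). Third, I would invert $H$: set $r_i := u_i^{1/s}$ where $s$ is determined by the constraint $\sum_i r_i = 1/2$, i.e. $s$ solves $\sum_i u_i^{1/s} = 1/2$, and check $r_i > 0$, so $\mathbf r \in \mathcal R$. Fourth, and this is the crux, I would show $s = \delta(\mathbf r)/2$ by comparing the defining equation of $s$ with the characterization of $\delta(\mathbf r)$ from Theorem \ref{main1}: the value $\lambda_0(\mathbf p) = \lambda(\mathbf u)$ being the \emph{bottom} of the spectrum forces a "double root" or tangency condition on the relevant resolvent, which algebraically is precisely the statement that $\sum_i r_i^{2s}$ together with its higher terms satisfies the Theorem \ref{main1} equation at exponent $2s$; equivalently, $\rho(\mathbf p) = e^{\delta(\mathbf r)}/(\text{normalizing factor})$ in the way that generalizes Kesten's $\rho = \sqrt{2n-1}/n$. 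Finally, uniqueness of $\mathbf r$ follows because $H$ is a diffeomorphism and the map $\mathbf p \mapsto \mathbf u$ (first-passage probabilities at the spectral radius) is injective, a standard fact for nearest-neighbor walks on trees.

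The step I expect to be the main obstacle is the fourth one: proving the identity $s = \delta(\mathbf r)/2$, equivalently that the edge lengths $\mathbf r$ manufactured from the Green function at the spectral radius are exactly those for which the half-Poincaré-exponent recovers the bottom of the spectrum. In the unweighted case this is the content of Grigorchuk's formula (Theorem \ref{cogrowth}) specialized to $G = \{\mathrm{id}\}$, and the difficulty in the weighted case is that there is no longer a single parameter to track — one must show that the $n$ first-passage values $u_i$ at $z = 1/\rho(\mathbf p)$ and the single exponent $s$ solving $\sum r_i = 1/2$ are mutually consistent with the $(2n-1)$-term series equation of Theorem \ref{main1} evaluated at $2s$. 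I would handle this by exploiting the tree's multiplicative structure: the series in Theorem \ref{main1} is the trace of a transfer-operator-type sum over reduced words, and evaluating it at exponent $2s$ with $r_i^{2s} = u_i^2$ turns it into a rational identity in the $u_i$ that is equivalent, after clearing denominators, to the criticality condition $\det(\text{something}) = 0$ characterizing $z = 1/\rho(\mathbf p)$ as a singularity of the Green function. Establishing that equivalence cleanly — ideally by showing both conditions reduce to the same resultant — is the technical heart of the argument, and I would isolate it as a lemma to be invoked here and, in suitably relativized form, in the proof of the general cogrowth formula for arbitrary $G$.
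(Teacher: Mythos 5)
Your overall strategy lines up well with the paper's: construct $\mathbf u$ from the first-passage generating functions of the $\mathbf p$-random walk at its spectral radius (this is exactly the paper's Theorem~\ref{final2}), invert $H$ to obtain $(\mathbf r, s)$, and argue that $s = \delta(\mathbf r)/2$ with $\lambda(\mathbf u) = \lambda_0(\mathbf p)$. You also correctly identify that this yields formula (\ref{Woess}) as a by-product. The part that is genuinely missing, however, is precisely the step you flag as the obstacle, and the gesture you make toward filling it does not amount to a proof.

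Concretely, you propose to establish $s = \delta(\mathbf r)/2$ by showing that the criticality of the Green function at $z = 1/\rho(\mathbf p)$ is, ``after clearing denominators,'' equivalent to the Theorem~\ref{main1} polynomial equation evaluated at exponent $2s$, and you would frame this as a resultant / double-root condition. That is the right intuition, but as written it is a claim, not an argument, and it is not obvious how to execute it: the Green function's branch point is an analytic condition in $t$, while the Theorem~\ref{main1} equation is a polynomial identity in the $u_i$, and the bridge between them is exactly what needs to be built. The paper builds this bridge differently and more directly: it parametrizes the constraint set $\{\mathbf u : \mathbf p(\mathbf u) = \mathbf p_0\}$ as a one-parameter curve $\gamma_{\mathbf p_0}(\tau)$ with $u_i(\tau) = \tfrac12\bigl(\sqrt{\tau^2 p_i^{-2} + 4} - \tau p_i^{-1}\bigr)$, computes $\lambda \circ \gamma_{\mathbf p_0}(\tau) = 1 - \sum_i \sqrt{\tau^2 + 4p_i^2} + (n-1)\tau$ explicitly (Proposition~\ref{derivative}), and then shows (Lemma~\ref{maximum2}) that the vanishing of $(\lambda \circ \gamma_{\mathbf p_0})'$ at the unique maximum $\tau_0$ is algebraically \emph{identical} to $l(u_1^2, \ldots, u_n^2) = 0$, which via Proposition~\ref{delta} is exactly the statement $2s_0 = \delta(\mathbf r_0)$. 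This calculus-of-the-constrained-maximum computation is the missing piece of your argument, and it also immediately supplies the inequality $\lambda_0(\mathbf p_0) = \lambda(\mathbf u_1) \le \lambda(\mathbf u_0)$ needed to close the two-sided estimate in Theorem~\ref{main2-b}; your sketch does not make that inequality explicit either. Your alternative route to uniqueness (injectivity of $\mathbf p \mapsto \mathbf u$ at the spectral radius plus $H$ being a diffeomorphism) is a reasonable substitute for the paper's argument via uniqueness of $\tau_0$, but it does not compensate for the unproved core identity. In short: right architecture, correct instinct about where the difficulty sits, but the pivotal step $s = \delta(\mathbf r)/2$ remains unestablished, and the resolution the paper actually uses (Proposition~\ref{derivative} plus the computation in Lemma~\ref{maximum2}) is substantively different from, and more concrete than, what you outline.
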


From this theorem, we can expect that the appropriate weight ${\bf p}_{*}({\bf r},s)$ for the Laplacian
is given by 
\[
{\bf p}_{*}({\bf r},s):=\begin{cases}
{\bf p}\circ H({\bf r},s) & (s>\delta({\bf r})/2)\\
{\bf p}\circ H({\bf r},\delta({\bf r})/2) & (s\leq\delta({\bf r})/2).
\end{cases}
\]
In Section 4, we generalize Grigorchuk's cogrowth formula in the following form. 
This is our  main result of this paper.
\begin{theorem}\label{main3}
For any subgroup $G\subset F_{n}$ and for any ${\bf r} \in \mathcal R$,
the bottom of the spectrum  $\lambda_{0}^G({\bf p}_{*}({\bf r},\delta_{G}({\bf r})))$ of 
the Laplacian $\Delta_{{\bf p}_{*}({\bf r},\delta_{G}({\bf r}))}$
on the quotient graph $G \backslash {\rm Cay}(F_n)$ is given by 
\begin{equation}\label{eps}
\lambda_{0}^G({\bf p}_{*}({\bf r},\delta_{G}({\bf r})))=\begin{cases}
\lambda \circ H({\bf r},\delta_{G}({\bf r})) & (\delta_{G}({\bf r})>\delta({\bf r})/2)\\
\lambda \circ H({\bf r},\delta({\bf r})/2) & (\delta_{G}({\bf r})\leq\delta({\bf r})/2).
\end{cases}
\end{equation}
\end{theorem}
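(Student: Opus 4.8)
The plan is to follow the strategy already foreshadowed in the introduction: reduce the statement about the quotient graph $G \backslash {\rm Cay}(F_n)$ to the eigenfunction relation (\ref{fundamental}) and the Patterson--Sullivan theory for $X_{\bf r}$. First I would recall, from the earlier sections, that for the weight ${\bf p} = {\bf p}\circ H({\bf r},s)$ and the corresponding eigenvalue $\lambda = \lambda\circ H({\bf r},s)$, the function $x \mapsto j_{\bf r}(x,\xi)^s$ satisfies $\Delta_{\bf p} j_{\bf r}(\cdot,\xi)^s = \lambda j_{\bf r}(\cdot,\xi)^s$ pointwise on ${\rm Cay}(F_n)$ for every boundary point $\xi \in \partial X_{\bf r}$. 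Integrating against the Patterson measure $\mu_G$ of $G$ on $\partial X_{\bf r}$ (of dimension $\delta_G({\bf r})$) produces a positive $G$-invariant $\lambda$-eigenfunction $\phi_s(x) = \int_{\partial X_{\bf r}} j_{\bf r}(x,\xi)^s\, d\mu_G(\xi)$ on the quotient; the exponent to pick is $s = \delta_G({\bf r})$ when that value exceeds $\delta({\bf r})/2$, and $s = \delta({\bf r})/2$ otherwise, matching the definition of ${\bf p}_*({\bf r},\delta_G({\bf r}))$ in both regimes.

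The argument then splits into the two standard halves of a cogrowth/bottom-of-spectrum computation, namely an upper and a lower bound for $\lambda_0^G({\bf p}_*)$. For the upper bound I would use the positive eigenfunction $\phi_s$ together with a Haagerup/Sullivan-type square-summability estimate: when $s > \delta({\bf r})/2$ the series defining $\phi_s$ behaves like one whose square over the orbit converges, so a truncation argument (multiply $\phi_s$ by cutoff functions supported on balls of growing radius in $G\backslash{\rm Cay}(F_n)$) yields test functions $f$ with $\langle \Delta_{\bf p} f, f\rangle / \langle f,f\rangle \to \lambda$, giving $\lambda_0^G({\bf p}_*) \le \lambda\circ H({\bf r},\delta_G({\bf r}))$; in the critical regime $\delta_G({\bf r}) \le \delta({\bf r})/2$ one uses instead the $F_n$-Patterson measure at exponent $\delta({\bf r})/2$ and the known value $\lambda_0({\bf p}) = \lambda\circ H({\bf r},\delta({\bf r})/2)$ from Theorem \ref{main2}, noting $\lambda_0^G \le \lambda_0$ always (restriction of functions). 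For the reverse inequality I would invoke the converse direction of the $\bf r$--$\bf p$ correspondence recorded in the introduction: any $\lambda' < \lambda_0^G({\bf p}_*)$ would, via the Green function of the $G$-walk, produce a solution ${\bf u}\in(0,1)^n$ and hence a smaller admissible exponent, forcing (through the monotonicity of $s \mapsto \lambda\circ H({\bf r},s)$ on $(0,\delta({\bf r})/2]$ and the Poincar\'e-series characterization of $\delta_G({\bf r})$ in Theorem \ref{main1}) a contradiction with the divergence/convergence dichotomy of the Patterson series at $s = \delta_G({\bf r})$.

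The key technical inputs I would cite rather than reprove are: the eigenrelation (\ref{fundamental}) and its translation into $\lambda = 1 - 2\sum u_k p_k - (u_i^{-1}-u_i)p_i$; the existence and conformal-density property of the Patterson measure $\mu_G$ on $\partial X_{\bf r}$ with exponent $\delta_G({\bf r})$, including the Shadow Lemma controlling $\mu_G$ of shadows of balls in terms of $e^{-\delta_G({\bf r}) d_{\bf r}}$; and Theorem \ref{main2} pinning $\lambda_0({\bf p})$ (equivalently $\rho({\bf p})$) to $\lambda\circ H({\bf r},\delta({\bf r})/2)$. The shadow estimates on the tree $X_{\bf r}$ are particularly clean because geodesics are unique, so the combinatorics reduces to summing $r_{i_1}^s\cdots r_{i_m}^s$-type products exactly as in Theorem \ref{main1}.

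The main obstacle I anticipate is the critical regime $\delta_G({\bf r}) \le \delta({\bf r})/2$, where the naive integral $\phi_{\delta_G({\bf r})}$ of $j_{\bf r}^{\delta_G({\bf r})}$ against $\mu_G$ does not lie in (or near) $L^2$ of the quotient and the eigenvalue it would produce exceeds $\lambda_0({\bf p}_*)$; one must instead build the near-optimal test functions from the $F_n$-Patterson density at exponent $\delta({\bf r})/2$ and show that restricting/averaging them over the $G$-quotient still achieves the bound $\lambda\circ H({\bf r},\delta({\bf r})/2)$ without loss. Making this transition rigorous—controlling the $\ell^2$ mass of the truncated eigenfunction on $G\backslash{\rm Cay}(F_n)$ uniformly and matching the Rayleigh quotient to the critical value—is the delicate step, and it is exactly where the hypothesis that ${\bf p}_*({\bf r},\delta_G({\bf r}))$ freezes at $H({\bf r},\delta({\bf r})/2)$ in this range is used; the super-critical case $\delta_G({\bf r}) > \delta({\bf r})/2$ is comparatively routine once the Patterson machinery on $X_{\bf r}$ is in place.
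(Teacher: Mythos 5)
Your proposal gets the broad architecture right (split into the sub- and super-critical cases, use the eigenrelation $\Delta_{\bf p}\,j^s=\lambda j^s$, integrate against a conformal density, and combine with Theorem \ref{main2}), and it correctly identifies the super-critical case as routine. However there are genuine gaps in both regimes.

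In the critical regime $\delta_G({\bf r})\le\delta({\bf r})/2$ your plan integrates $j(\cdot,\xi)^{\delta({\bf r})/2}$ against either ``the Patterson measure $\mu_G$ of $G$ of dimension $\delta_G({\bf r})$'' or ``the $F_n$-Patterson measure at exponent $\delta({\bf r})/2$.'' Neither works: $\int j(x,\xi)^s\,d\mu_G(\xi)$ is $G$-invariant only when $s$ equals the conformal dimension of $\mu_G$, so using $\mu_G$ of dimension $\delta_G$ with $s=\delta({\bf r})/2$ does not descend to the quotient; and there is no $F_n$-conformal density at exponent $\delta({\bf r})/2$ (for the cocompact group $F_n$ that exponent is forced to be $\delta({\bf r})$). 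What is needed --- and what the paper constructs via the Anderson--Falk--Tukia ending-measure argument --- is a \emph{$G$-invariant} conformal measure of the strictly larger dimension $\delta({\bf r})/2$, which exists precisely because $G$ is not cocompact. Without this object you have no positive eigenfunction of eigenvalue $\lambda\circ H({\bf r},\delta({\bf r})/2)$ on the quotient, and hence no proof of the inequality $\lambda_0^G\ge\lambda\circ H({\bf r},\delta({\bf r})/2)$.

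For the ``reverse'' direction you propose a Green-function/contradiction argument modelled on Theorem \ref{final2}. This is both unnecessary and dubious: unnecessary because the inequality $\lambda\circ H\le\lambda_0^G$ follows immediately from the characterization (Woess, Lemmas 7.2 and 7.6) of $\lambda_0^G$ as the maximum eigenvalue admitting a positive eigenfunction on the quotient, once the eigenfunction is produced; dubious because the first-hitting factorization used to define $u_i$ in Theorem \ref{final2} relies on the tree structure of ${\rm Cay}(F_n)$ and does not transfer to the quotient graph $G\backslash{\rm Cay}(F_n)$, which is not a tree. Finally, your $L^2$/truncation estimate for the upper bound in the super-critical case tacitly assumes $G$ is convex cocompact (so the quotient is a finite core with trees attached); the paper handles general $G$ by first proving the finitely generated case and then passing to an exhaustion $G_1\subset G_2\subset\cdots$ with $\delta_{G_k}({\bf r})\to\delta_G({\bf r})$, a step your proposal omits.
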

We recall from \cite{MR0109367, MR0112053} that if $N$ is a normal subgroup of $F_n$ then $\lambda_0^N(\bf p)$ is equal to zero for any $\bf p$ if and only if $F_n/N$ is amenable. Combining this characterization with Theorem  \ref{main3} applied to
${\bf p}={\bf p} \circ H({\bf r}, \delta({\bf r}))$,  we obtain the following amenability criterion. The corollary below was proved in \cite{MR3240930}  using different methods. In the case of  equal edge lengths, the corollary is  Grigorchuk's amenability criterion. An alternative proof is given in  \cite{MR2338235}.   For Kleinian groups a related result is due to Brooks (\cite{MR783536}). 

\begin{corollary}[Weighted cogrowth criterion for amenability] Let $N$ be a normal subgroup of $F_n$. Then the weighted cogrowth $\delta_N({\bf r})/\delta_{F_n}({\bf r})$ is equal to one if and only if $F_n/N$ is amenable.
\end{corollary}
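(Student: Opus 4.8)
The plan is to derive the corollary by combining Theorems \ref{main1} and \ref{main3} with the Kesten--type characterization of amenability recalled just above. Fix ${\bf r}\in\mathcal R$ and a normal subgroup $N\subset F_n$, and recall that $\delta({\bf r})=\delta_{F_n}({\bf r})$ by definition, that $\delta({\bf r})\in(0,1)$ by Theorem \ref{main1}, and that $\delta_N({\bf r})\le\delta_{F_n}({\bf r})$ since $N\subseteq F_n$. In particular the weighted cogrowth $\delta_N({\bf r})/\delta_{F_n}({\bf r})$ equals $1$ exactly when $\delta_N({\bf r})=\delta({\bf r})$, so it suffices to show that $\delta_N({\bf r})=\delta({\bf r})$ if and only if $F_n/N$ is amenable.

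First I would set $t:=\max\{\delta_N({\bf r}),\,\delta({\bf r})/2\}$, so that $\delta({\bf r})/2\le t\le\delta({\bf r})$ and hence $t\in(0,1)$. Applying Theorem \ref{main3} with $G=N$, the right-hand side of (\ref{eps}) collapses (in both of its cases) to $\lambda\circ H({\bf r},t)$, so that $\lambda_0^N({\bf p}_*({\bf r},\delta_N({\bf r})))=\lambda\circ H({\bf r},t)$. By Theorem \ref{main1}, $s=\delta({\bf r})$ is the \emph{unique} solution in $(0,1)$ of $\lambda\circ H({\bf r},s)=0$; and since $\delta({\bf r})/2<\delta({\bf r})$, the maximum $t$ equals $\delta({\bf r})$ precisely when $\delta_N({\bf r})=\delta({\bf r})$. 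Chaining these gives
\[
\delta_N({\bf r})=\delta({\bf r}) \Longleftrightarrow t=\delta({\bf r}) \Longleftrightarrow \lambda\circ H({\bf r},t)=0 \Longleftrightarrow \lambda_0^N({\bf p}_*({\bf r},\delta_N({\bf r})))=0 .
\]
Finally, ${\bf p}_*({\bf r},\delta_N({\bf r}))$ lies in $\mathcal P$ by construction and defines a symmetric, finitely supported random walk whose support generates $F_n$; so the characterization recalled from \cite{MR0109367, MR0112053} identifies the vanishing of $\lambda_0^N$ at this weight with amenability of $F_n/N$. Combined with the displayed chain this gives ``$\delta_N({\bf r})=\delta({\bf r})$ if and only if $F_n/N$ is amenable'', which is the assertion.

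The only step requiring care is the last one: the recalled characterization has to be used in the form ``$\lambda_0^N({\bf p})=0$ for the single admissible weight ${\bf p}={\bf p}_*({\bf r},\delta_N({\bf r}))$ if and only if $F_n/N$ is amenable'', rather than ``for all ${\bf p}$''. This is legitimate because Kesten's dichotomy states, for each fixed symmetric finitely supported generating weight ${\bf p}$, that $\rho^N({\bf p})=1$ if and only if $F_n/N$ is amenable; hence $\lambda_0^N({\bf p})=1-\rho^N({\bf p})=0$ already forces $F_n/N$ to be amenable, the converse being the easy direction. Everything else is a direct reading of Theorems \ref{main1} and \ref{main3}; in particular Theorem \ref{main2} is not needed, since the only place where it might enter --- the non-vanishing of $\lambda\circ H({\bf r},\delta({\bf r})/2)$ in the regime $\delta_N({\bf r})\le\delta({\bf r})/2$ --- is already contained in the uniqueness assertion of Theorem \ref{main1}.
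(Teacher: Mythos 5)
Your argument is correct and matches the paper's intended proof, which combines Theorem \ref{main3}, the uniqueness in Theorem \ref{main1} (equivalently Proposition \ref{delta}), and the Kesten characterization. You also tidy up two small imprecisions in the paper's one-line sketch: the weight to which Theorem \ref{main3} actually applies is ${\bf p}_*({\bf r},\delta_N({\bf r}))$ rather than ${\bf p}\circ H({\bf r},\delta({\bf r}))$ (the two coincide only when $\delta_N({\bf r})=\delta({\bf r})$, i.e.\ only in one direction of the equivalence), and you correctly observe that the Kesten dichotomy is legitimately invoked for a single fixed generating weight rather than needing the vanishing of $\lambda_0^N({\bf p})$ for all ${\bf p}$.
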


\section{A computation of the Poincar\'e exponent}

The Cayley graph ${\rm Cay}(F_{n})$ of the free group $F_{n}=\langle a_{1},a_{2},\ldots,a_{n}\rangle$
is the regular tree of valency $2n$. For any positive real numbers $r_{1},r_{2},\ldots,r_{n}>0$
with the normalization $r_{1}+r_{2}+\cdots+r_{n}=1/2$, we assign length $-\log r_{i}$
to the edges of labels $a_{i}$ and $a_{i}^{-1}$ in ${\rm Cay}(F_{n})$
for $i=1,2,\ldots,n$. 
We regard this proper metric space as a Gromov
hyperbolic space and represent it by $X_{{\bf r}}$ with the distance $d_{{\bf r}}$
for every
\[
{\bf r} \in \mathcal R:=\{{\bf r}=(r_1,\ldots,r_n) \mid r_1+\cdots +r_n=1/2,\ r_i>0\ (\forall i)\}. 
\]
The free group $F_{n}$
acts on $X_{{\bf r}}$ isometrically, properly discontinuously and
cocompactly. We choose the vertex $\id$ of ${\rm Cay}(F_{n})$ as
the base point $o$ of $X_{{\bf r}}$.

For a vertex $x\in X_{{\bf r}}$ and $\xi\in\partial X_{{\bf r}}$, set $j(x,\xi)=\exp(-b_{\xi}(x))$,
where $b_{\xi}$ is the Busemann function with respect to the geodesic
ray from $o$ to $\xi$. For $s\geq0$, a conformal measure of dimension
$s$ is a family of positive finite Borel measures $\{\mu_{x}\}_{x\in X_{{\bf r}}}$
on $\partial X_{{\bf r}}$ such that 
\[
\frac{d\mu_{x}}{d\mu_{y}}(\xi)=\left(\frac{j(x,\xi)}{j(y,\xi)}\right)^{s}
\]
for any vertices $x,y\in X_{{\bf r}}$. For a subgroup $G\subset F_{n}$, the
conformal measure $\{\mu_{x}\}_{x\in X_{{\bf r}}}$ is $G$-invariant
if $\mu_{g(x)}(g(E))=\mu_{x}(E)$ for every vertex $x\in X_{{\bf r}}$ and
for every Borel subset $E\subset\partial X_{{\bf r}}$. For any $G$-invariant
conformal measure $\mu=\{\mu_{x}\}_{x\in X_{{\bf r}}}$ of dimension
$s$, the total mass function 
\[
\varphi_{\mu}(x)=\int_{\partial X_{{\bf r}}}d\mu_{x}=\int_{\partial X_{{\bf r}}}j(x,\xi)^{s}d\mu_{o}(\xi)
\]
is $G$-invariant.

For any subgroup $G\subset F_{n}$, the exponent of convergence is
defined by 
\[
\delta_{G}({\bf r})=\limsup_{R\to\infty}\frac{\log\#\{g\in G\mid d_{{\bf r}}(o,g(o))\leq R\}}{R}.
\]
A $G$-invariant conformal measure of dimension
$\delta_{G}({\bf r})$ is called a Patterson measure for $G$. The results on the Patterson measure for a discrete group acting on a Gromov hyperbolic space
can be summarized as follows in our particular situation.

\begin{theorem}[Coornaert \cite{MR1214072}]\label{patterson}
For every subgroup $G \subset F_n$, there exists a $G$-invariant conformal measure $\mu$
of dimension $\delta_G({\bf r})$.
If $G$ is finitely generated, then it is unique up to constant multiples.
\end{theorem}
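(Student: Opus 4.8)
The approach is the classical Patterson--Sullivan construction (compare \cite{MR1214072}), which is particularly transparent here since $X_{{\bf r}}$ is a tree; for the uniqueness assertion I would, when $G$ is finitely generated, reduce to the uniqueness of a Perron--Frobenius eigenvector of a finite non-negative matrix attached to the quotient graph $G\backslash T_{G}$, where $T_{G}$ is the minimal $G$-invariant subtree.

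\emph{Existence.} Set $P_{G}(s)=\sum_{g\in G}e^{-s\,d_{{\bf r}}(o,g(o))}$; by definition of $\delta_{G}({\bf r})$ this converges for $s>\delta_{G}({\bf r})$ and diverges for $s<\delta_{G}({\bf r})$. If it diverges at $s=\delta_{G}({\bf r})$ we proceed directly; otherwise I would first apply Patterson's trick, replacing the summand by $h(d_{{\bf r}}(o,g(o)))\,e^{-s\,d_{{\bf r}}(o,g(o))}$ for a suitable non-decreasing, slowly varying weight $h$, so that the exponent of convergence is unchanged but the modified series diverges at $s=\delta_{G}({\bf r})$. For $s>\delta_{G}({\bf r})$ form the probability measures
\[
\mu_{x}^{s}=\frac{1}{P_{G}(s)}\sum_{g\in G}h(d_{{\bf r}}(x,g(o)))\,e^{-s\,d_{{\bf r}}(x,g(o))}\,\varepsilon_{g(o)}
\]
on the compact space $\overline{X_{{\bf r}}}=X_{{\bf r}}\cup\partial X_{{\bf r}}$, where $\varepsilon_{z}$ is the Dirac mass at $z$. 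By weak-$*$ compactness, pass to $s_{k}\downarrow\delta_{G}({\bf r})$ with $\mu_{o}^{s_{k}}\to\mu_{o}$. Because $X_{{\bf r}}$ is a tree, for each fixed vertex $x$ the Radon--Nikodym density $(d\mu_{x}^{s}/d\mu_{o}^{s})(z)$ extends continuously to $\overline{X_{{\bf r}}}$ and, as $s_{k}\downarrow\delta_{G}({\bf r})$, converges \emph{uniformly} on $\overline{X_{{\bf r}}}$ to the appropriate power of $j(x,\cdot)$ dictated by the Busemann function (here the monotone slow variation of $h$ and the local constancy of the relevant distance differences on the tree are used). Hence $\mu_{x}^{s_{k}}\to\mu_{x}$ for every $x$, the limiting family $\{\mu_{x}\}_{x\in X_{{\bf r}}}$ satisfies the conformality relation of Section 2 with dimension $\delta_{G}({\bf r})$, and $G$-invariance follows by reindexing the sum and using that $d_{{\bf r}}$ is $G$-invariant. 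Finally, divergence of the modified series at $s=\delta_{G}({\bf r})$ forces all the mass of $\mu_{o}$ onto $\partial X_{{\bf r}}$ --- no mass survives on any bounded part of $X_{{\bf r}}$ --- so $\mu=\{\mu_{x}\}$ is a Patterson measure.

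\emph{Uniqueness.} Let $G$ be finitely generated; I treat the non-elementary case, the (virtually cyclic) elementary cases being immediate. By the Nielsen--Schreier theorem $G$ is free of finite rank, and it acts freely and \emph{cocompactly} on its minimal invariant subtree $T_{G}$, whose boundary is the limit set $\Lambda(G)$; the quotient $\Gamma=G\backslash T_{G}$ is a finite graph with $b_{1}(\Gamma)=\mathrm{rank}\,G\ge2$. Any $G$-invariant conformal measure $\mu$ of dimension $\delta:=\delta_{G}({\bf r})$ is carried by $\Lambda(G)=\partial T_{G}$, since $G$ acts properly discontinuously with infinite orbits on $\partial X_{{\bf r}}\setminus\Lambda(G)$, which can carry no finite invariant measure. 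On $\partial T_{G}$ the shadows of vertices of $T_{G}$ are exactly the cone sets, and whenever a vertex $w$ lies on the geodesic from a vertex $v$ towards $\xi$ the conformality relation makes $d\mu_{v}/d\mu_{w}$ equal to the constant $e^{\delta\,d_{{\bf r}}(v,w)}$ on the cone at $w$. Enumerate the directed edges $f_{1},\dots,f_{N}$ of $\Gamma$, pick lifts $\tilde f_{j}\subset T_{G}$ with tails $\tilde v_{j}$, and put $m_{j}:=\mu_{\tilde v_{j}}(\Lambda_{j})$, where $\Lambda_{j}$ is the cone at $\tilde v_{j}$ in the direction $\tilde f_{j}$; by $G$-invariance this is independent of the lift. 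Cocompactness (no dead ends in $T_{G}$) partitions $\Lambda_{j}$ into the sub-cones at the head of $\tilde f_{j}$ along the directed edges $f_{k}$ following $f_{j}$ without backtracking, and combining this with the constancy of the Busemann factor gives
\[
m_{j}=e^{\delta\,\ell(f_{j})}\sum_{k\,:\,f_{k}\text{ follows }f_{j}}m_{k},
\]
so that $(m_{1},\dots,m_{N})$ is a positive eigenvector with eigenvalue $1$ of the non-negative matrix $M$ defined by $M_{jk}=e^{\delta\,\ell(f_{j})}$ if $f_{k}$ follows $f_{j}$ and $M_{jk}=0$ otherwise. Since $G$ is non-elementary, $M$ is irreducible, so the Perron--Frobenius theorem makes $(m_{j})$ unique up to a scalar; as $\mu$ is carried by $\Lambda(G)$ and its values on the generating algebra of cones in $\partial T_{G}$ are determined by the $m_{j}$ together with the above recursion, $\mu$ is unique up to constant multiples. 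That the Perron eigenvalue of $M$ equals $1$ precisely when $s=\delta_{G}({\bf r})$ is a Bowen-type equation, entirely in the spirit of Theorem \ref{main1}.

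\emph{Main obstacle.} The crux is the passage to the finite model: that a finitely generated $G$ genuinely acts cocompactly on $T_{G}$ (hence is of divergence type), that $\Lambda(G)=\partial T_{G}$ carries every $G$-invariant conformal measure, and that the non-backtracking transition matrix $M$ is irreducible. On the analytic side, the other point requiring care is the uniform convergence of the pre-measures $\mu_{x}^{s_{k}}$ on the compactification $\overline{X_{{\bf r}}}$, where the tree structure and the slow variation of $h$ enter. The remaining ingredients --- conformality and $G$-invariance of the weak limit, the direct treatment of elementary $G$, and the Bowen-type characterization of $\delta_{G}({\bf r})$ through $M$ --- are routine once these are settled.
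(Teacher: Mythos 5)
The paper does not prove this statement at all; it quotes it from Coornaert [Coo93] and uses it as a black box, so your proposal is supplying a proof rather than reproducing one. The existence half follows Patterson's original construction, which is exactly what Coornaert does in the general Gromov-hyperbolic setting, and your observation that on a tree the cocycle ratios $j(x,z)/j(o,z)$ \emph{stabilize} (rather than merely converge) once $z$ passes the branch point makes the uniform-convergence step cleaner than in the source; this part is sound. The uniqueness half is a genuinely different route from the one cited: Coornaert establishes uniqueness for (quasi-)convex cocompact groups through ergodicity of the Patterson--Sullivan density and a comparison with Hausdorff measure on the limit set, whereas you exploit the finite quotient graph $\Gamma=G\backslash T_G$ and reduce to Perron--Frobenius for the non-backtracking transition matrix $M$. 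That is an elegant, tree-specific shortcut, and it produces the Bowen-type characterization of $\delta_G(\mathbf{r})$ along the way, matching the spirit of Theorem \ref{main1}.

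Two steps need repair. First, your reason that any $\delta_G$-conformal $G$-invariant density is carried by $\Lambda(G)$ (``$\partial X_{\mathbf{r}}\setminus\Lambda(G)$ can carry no finite invariant measure'') does not apply as written: the family $\{\mu_x\}$ is conformal, not literally invariant, so a single $\mu_o$ need not be $G$-invariant, and mass off $\Lambda(G)$ is not excluded by that observation. The correct argument passes through a shadow lemma together with divergence of the Poincar\'e series of the convex cocompact group at $\delta_G$. Second, the elementary case is not ``immediate'' --- it is precisely where the statement needs a caveat: for $G$ trivial one has $\delta_G(\mathbf{r})=0$ and every finite Borel measure on $\partial X_{\mathbf{r}}$ is a $0$-conformal $G$-invariant density; for $G$ infinite cyclic any nonnegative combination of the two Dirac masses at the endpoints of the axis works. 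Uniqueness genuinely fails there, and Coornaert's result is stated for non-elementary groups; the theorem as reproduced in the paper should be read with the same implicit restriction. Finally, the irreducibility of $M$ (needed to invoke Perron--Frobenius) deserves a sentence: it holds because $\Gamma$ has minimum degree at least two and first Betti number at least two, which rules out the cycle case in which the non-backtracking edge graph splits into two components.
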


\begin{remark}
It is well known that $G$ is convex cocompact if and only if $G$ is finitely generated (\cite{MR1468105, MR1170365, MR1804703}).
\end{remark}

For every ${\bf p}=(p_{1},p_{2},\ldots,p_{n})$ with $p_{1}+p_{2}+\cdots+p_{n}=1/2$ and $p_i>0$ for all $i$,
we define a transition matrix $A_{\bf p}=(p(x,y))_{x,y}$ on the vertices of ${\rm Cay}(F_{n})$
by $p(x,y)=p_{i}$ if $y=xa_{i}$ or $y=xa_{i}^{-1}$ for $i=1,2,\ldots,n$.
The discrete Laplacian on ${\rm Cay}(F_{n})$ of weight 
\[
{\bf p} \in \mathcal P:=\{{\bf p}=(p_1,\ldots,p_n) \mid p_1+\cdots +p_n=1/2,\ p_i>0\ (\forall i)\}
\]
is defined by $\Delta_{{\bf p}}=I-A_{\bf p}$.

\begin{proposition} \label{totalmassfunction is constant} Let $\mu=\{\mu_{x}\}_{x\in X_{{\bf r}}}$
be the Patterson measure for $F_{n}$ on $\partial X_{{\bf r}}$.
Then 
\[
\int_{\partial X_{{\bf r}}}\Delta_{{\bf p}}j(x,\xi)^{\delta({\bf r})}d\mu_{o}(\xi)=0
\]
for every ${\bf p} \in \mathcal P$, where $\delta({\bf r})=\delta_{F_{n}}({\bf r})$.
\end{proposition}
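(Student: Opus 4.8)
The plan is to reduce the identity to the fact that the total mass function $\varphi_\mu$ of the Patterson measure is constant on vertices, together with the observation that the weighted Laplacian annihilates constant functions. First I would invoke Theorem \ref{patterson} to guarantee that the $F_n$-invariant conformal measure $\mu=\{\mu_x\}_{x\in X_{\bf r}}$ of dimension $\delta({\bf r})$ exists, so that $\varphi_\mu(x)=\int_{\partial X_{\bf r}}j(x,\xi)^{\delta({\bf r})}\,d\mu_o(\xi)=\mu_x(\partial X_{\bf r})$ is a well-defined positive finite function on the vertex set. By the discussion preceding the proposition, $\varphi_\mu$ is $F_n$-invariant; since $F_n$ acts transitively on the vertices of ${\rm Cay}(F_n)$ (indeed $x=x(o)$ for every vertex $x$, the stabilizer of $o=\id$ being trivial), this forces $\varphi_\mu\equiv\varphi_\mu(o)$.

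Next I would move $\Delta_{\bf p}$ through the integral. Since $(\Delta_{\bf p}f)(x)=f(x)-\sum_{i=1}^np_i\bigl(f(xa_i)+f(xa_i^{-1})\bigr)$ is a finite linear combination of point evaluations, and $\mu_o$ is a finite measure with $j(\cdot,\xi)^{\delta({\bf r})}$ bounded on $\partial X_{\bf r}$ for each fixed vertex, the interchange is immediate and yields
\[
\int_{\partial X_{\bf r}}\Delta_{\bf p}j(x,\xi)^{\delta({\bf r})}\,d\mu_o(\xi)=\varphi_\mu(x)-\sum_{i=1}^np_i\bigl(\varphi_\mu(xa_i)+\varphi_\mu(xa_i^{-1})\bigr)=(\Delta_{\bf p}\varphi_\mu)(x).
\]
Finally, because $\varphi_\mu$ is constant and $A_{\bf p}$ is stochastic (i.e. $\sum_{i=1}^n2p_i=1$, so $A_{\bf p}$ fixes constants), the right-hand side equals $\bigl(1-\sum_{i=1}^n2p_i\bigr)\varphi_\mu(o)=0$, which is the assertion; the computation is valid for every ${\bf p}\in\mathcal P$.

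There is no serious obstacle here: the substantive input is Coornaert's existence theorem and the transitivity of the $F_n$-action, which together make the total mass function constant, and the rest is bookkeeping. The only points requiring a word of care are the finiteness of $\varphi_\mu$, which is automatic since each $\mu_x$ is by definition a finite measure, and the legitimacy of passing $\Delta_{\bf p}$ inside the integral, which is trivial because $\Delta_{\bf p}$ has finite range. I would also note in passing that the same argument shows $\int_{\partial X_{\bf r}}\Delta_{\bf p}j(x,\xi)^s\,d\mu_o(\xi)=0$ for \emph{any} $F_n$-invariant conformal measure of \emph{any} dimension $s$; the specific exponent $\delta({\bf r})$ enters only through Theorem \ref{patterson} to ensure that such a measure exists.
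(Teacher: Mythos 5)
Your proof is correct and takes essentially the same approach as the paper: both reduce the identity to the constancy of the total mass function $\varphi_\mu$, which follows from $F_n$-invariance of $\mu$ plus transitivity of the vertex action, after which $\Delta_{\bf p}\varphi_\mu=0$ because the stochastic operator $A_{\bf p}$ fixes constants. The extra care you take about finiteness, the interchange of $\Delta_{\bf p}$ with the integral, and the closing remark that any dimension $s$ would do are all accurate elaborations of the same argument.
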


\begin{proof} Since $F_{n}$ acts transitively on the vertices of
${\rm Cay}(F_{n})$, the $F_{n}$-invariant function $\varphi_{\mu}(x)=\int_{\partial X_{{\bf r}}}d\mu_{x}$
is constant. Hence, for every ${\bf p}$, 
\[
\Delta_{{\bf p}}\varphi_{\mu}(x)=\int_{\partial X_{{\bf r}}}\Delta_{{\bf p}}j(x,\xi)^{\delta({\bf r})}d\mu_{o}(\xi)=0.
\]
 \end{proof}
We compute $\Delta_{{\bf p}}j(x,\xi)^{s}$ and obtain the following:
if $\xi\in\partial X_{{\bf r}}$ is in the direction of $a_{i}$ or
$a_{i}^{-1}$ starting from a vertex $x\in X_{r}$ for $i=1,2,\ldots,n$,
then 
\[
\frac{\Delta_{{\bf p}}j(x,\xi)^{s}}{j(x,\xi)^{s}}=1-r_{i}^{-s}p_{i}-r_{i}^{s}\,p_{i}-2\sum_{k\neq i}r_{k}^{s}\,p_{k}=:c_{i}({\bf r},s,{\bf p}).
\]

\begin{proposition} \label{existence of zero of c} The functions
$c_{i}({\bf r},s,{\bf p})$ $(i=1,2,\ldots,n)$ of $s\in[0,\infty)$
satisfies the following properties for any fixed ${\bf r} \in \mathcal R$
and ${\bf p} \in \mathcal P$: 
\begin{enumerate}
\item $c_{i}({\bf r},0,{\bf p})=0$ and $\frac{\partial}{\partial s}c_{i}({\bf r},s,{\bf p})|_{s=0}>0$; 
\item $\frac{\partial^{2}}{\partial s^{2}}c_{i}({\bf r},s,{\bf p})<0$; 
\item $\lim_{s\to\infty}c_{i}({\bf r},s,{\bf p})=-\infty$. 
\end{enumerate}
Hence, each $c_{i}({\bf r},s,{\bf p})$ has a unique zero $s_{i}=s_{i}({\bf r},{\bf p})\neq0$,
and satisfies $\frac{\partial}{\partial s}c_{i}({\bf r},s,{\bf p})|_{s=s_{i}}<0$.
\end{proposition}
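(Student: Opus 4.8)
The plan is to verify properties (1)--(3) by a direct one-variable computation, and then to deduce the assertions about the zero $s_i$ from strict concavity together with the boundary behaviour of $c_i$ in $s$ (with $\mathbf r\in\mathcal R$ and $\mathbf p\in\mathcal P$ fixed throughout).

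First I would compute the first two derivatives. Writing $c_i(s)=1-p_ir_i^{-s}-p_ir_i^{s}-2\sum_{k\neq i}p_kr_k^{s}$ and using $\tfrac{d}{ds}r_k^{s}=r_k^{s}\log r_k$, one gets
\[
\frac{\partial}{\partial s}c_i(s)=p_ir_i^{-s}\log r_i-p_ir_i^{s}\log r_i-2\sum_{k\neq i}p_kr_k^{s}\log r_k,
\]
\[
\frac{\partial^{2}}{\partial s^{2}}c_i(s)=-p_ir_i^{-s}(\log r_i)^2-p_ir_i^{s}(\log r_i)^2-2\sum_{k\neq i}p_kr_k^{s}(\log r_k)^2.
\]
For (1): at $s=0$ the value equals $1-2p_i-2\sum_{k\neq i}p_k=1-2\sum_kp_k=0$ by the normalization $\sum_kp_k=1/2$, while in the first derivative at $s=0$ the two $p_i$-terms cancel, leaving $-2\sum_{k\neq i}p_k\log r_k$; this is strictly positive because $0<r_k<1/2<1$ forces $\log r_k<0$ and because $n\ge2$ makes the sum over $k\neq i$ nonempty. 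For (2): every $r_k$ lies in $(0,1/2)$, hence $r_k\neq1$ and $(\log r_k)^2>0$; since all $p_k>0$, every term of $\frac{\partial^2}{\partial s^2}c_i$ is negative (already the single term $-p_ir_i^{-s}(\log r_i)^2$ makes the sum negative). For (3): as $s\to\infty$ we have $r_i^{-s}\to\infty$ while $r_i^{s},r_k^{s}\to0$, so $-p_ir_i^{-s}$ dominates and $c_i(s)\to-\infty$.

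Finally I would argue as follows. By (2) the function $s\mapsto\frac{\partial}{\partial s}c_i(s)$ is strictly decreasing, and by (1) it is positive at $s=0$; it cannot remain positive on all of $[0,\infty)$, since then $c_i$ would be increasing and could not tend to $-\infty$ as in (3). Hence $\frac{\partial}{\partial s}c_i$ has a unique zero $s^{*}>0$, being positive on $[0,s^{*})$ and negative on $(s^{*},\infty)$, so $c_i$ is strictly increasing on $[0,s^{*}]$ and strictly decreasing on $[s^{*},\infty)$. Together with $c_i(0)=0$ this yields $c_i>0$ on $(0,s^{*}]$; and strict monotonicity on $(s^{*},\infty)$ with $c_i(s^{*})>0$ and $\lim_{s\to\infty}c_i(s)=-\infty$ gives exactly one zero $s_i\in(0,\infty)$, which lies in $(s^{*},\infty)$. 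Since $s_i>s^{*}$ we conclude $\frac{\partial}{\partial s}c_i|_{s=s_i}<0$.

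There is no real obstacle here; the computation is routine, and the only points that use the hypotheses are the strict inequality $\frac{\partial}{\partial s}c_i|_{s=0}>0$, which needs $n\ge2$ (so that $\sum_{k\neq i}$ is nonempty) and $r_k<1$ for every $k$ (a consequence of $r_k>0$ and $\sum_kr_k=1/2$), and the strict negativity in (2), which uses $r_i\neq1$ and $p_i>0$.
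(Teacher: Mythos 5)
Your proof is correct and follows the same route as the paper: you compute the first and second derivatives in $s$ exactly as the paper does to establish (1) and (2), verify (3) by noting the $r_i^{-s}$ term dominates, and then deduce the existence, uniqueness, and positivity of $s_i$ together with $\frac{\partial}{\partial s}c_i|_{s=s_i}<0$ from strict concavity, $c_i(0)=0$, and the limit at infinity. The paper leaves these last steps as ``straightforward''; you have simply spelled them out, including the correct observations that $n\ge2$ ensures the sum in (1) is nonempty and that $0<r_k<1$ makes each $\log r_k$ nonzero and negative.
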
 \begin{proof} The second assertion in (1) follows
from the fact that 
\[
\frac{\partial}{\partial s}c_{i}({\bf r},s,{\bf p})|_{s=0}=[\log( r_i) r_i^{-s}p_i -\log(r_i) r_i^s p_i -2\sum_{j\neq i}\log(r_j) r_{j}^{s}p_{j}]\big|_{s=0}=-2\sum_{j\neq i}\log(r_j) p_{j}>0.
\]
The statement in (2) follows from 
\begin{align*}
\frac{ \partial^{2}}{\partial s^{2}}c_{i}({\bf r},s,{\bf p})
&=\frac{\partial}{\partial s}[\log( r_i) r_i^{-s}p_i -\log(r_i) r_i^s p_i -2\sum_{j\neq i}\log(r_j) r_{j}^{s}p_{j}]\\
&=-\log^2(r_i)r_i^{-s}p_i -\log^2(r_i)r_i^{s}p_i -2\sum_{j\neq i}\log^2(r_j) r_j^s p_{j}<0.
\end{align*}
%
The proofs of the remaining assertions are straightforward.
\end{proof}

\begin{lemma}\label{delta is zero} $\delta({\bf r})$ lies between $\min_{1\leq i\leq n}s_{i}({\bf r},{\bf p})$
and $\max_{1\leq i\leq n}s_{i}({\bf r},{\bf p})$. \end{lemma}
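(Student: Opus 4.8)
The plan is to exploit Proposition \ref{totalmassfunction is constant}, which says that $\int_{\partial X_{\bf r}} \Delta_{\bf p} j(x,\xi)^{\delta({\bf r})}\, d\mu_o(\xi)=0$, together with the pointwise identity $\Delta_{\bf p} j(x,\xi)^s = c_i({\bf r},s,{\bf p})\, j(x,\xi)^s$ valid whenever $\xi$ lies in the direction of $a_i^{\pm 1}$ from the vertex $x$. Fix the base vertex $x=o$. The boundary $\partial X_{\bf r}$ decomposes (up to the $\mu_o$-null set of endpoints of edges, which is empty here since $\partial X_{\bf r}$ is a Cantor set and each ``direction'' from $o$ is a clopen set of positive measure) into the $2n$ clopen subsets $E_i^{+}, E_i^{-}$ consisting of boundary points whose geodesic from $o$ begins with $a_i$ or $a_i^{-1}$ respectively. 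On each such piece the integrand equals $c_i({\bf r},\delta({\bf r}),{\bf p})\, j(o,\xi)^{\delta({\bf r})}$, so Proposition \ref{totalmassfunction is constant} becomes
\[
\sum_{i=1}^n c_i({\bf r},\delta({\bf r}),{\bf p})\left(\int_{E_i^+} j(o,\xi)^{\delta({\bf r})} d\mu_o(\xi) + \int_{E_i^-} j(o,\xi)^{\delta({\bf r})} d\mu_o(\xi)\right)=0.
\]

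Denote the parenthesized quantity by $w_i=w_i({\bf r},{\bf p})>0$; it is strictly positive because $\mu_o$ has full support on $\partial X_{\bf r}$ and $j(o,\cdot)>0$. Thus $\sum_{i=1}^n w_i\, c_i({\bf r},\delta({\bf r}),{\bf p})=0$, i.e. $\delta({\bf r})$ is a zero of the strictly positive-weighted combination $C(s):=\sum_i w_i c_i({\bf r},s,{\bf p})$. Now I would argue that any common-sign situation is impossible: if $c_i({\bf r},\delta({\bf r}),{\bf p})>0$ for all $i$ then $C(\delta({\bf r}))>0$, a contradiction, and likewise if all are negative. Hence at least one $c_i$ is $\geq 0$ and at least one is $\leq 0$ at $s=\delta({\bf r})$. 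By Proposition \ref{existence of zero of c}, each $c_i(\cdot)$ is positive on $(0,s_i)$ and negative on $(s_i,\infty)$ (it starts at $0$ with positive slope, is strictly concave, and tends to $-\infty$). Therefore $c_i({\bf r},\delta({\bf r}),{\bf p})\geq 0$ forces $\delta({\bf r})\leq s_i$, and $c_i({\bf r},\delta({\bf r}),{\bf p})\leq 0$ forces $\delta({\bf r})\geq s_i$. Combining, $\min_i s_i({\bf r},{\bf p}) \leq \delta({\bf r}) \leq \max_i s_i({\bf r},{\bf p})$, which is the claim. (One should also note $\delta({\bf r})>0$ since $F_n$ has exponential growth, so the sign analysis takes place on $(0,\infty)$ where Proposition \ref{existence of zero of c} applies.)

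The main obstacle I anticipate is the bookkeeping needed to justify that the integrand on each clopen set $E_i^{\pm}$ is genuinely constant times $j(o,\xi)^{\delta({\bf r})}$ — one must check that for \emph{every} $\xi\in E_i^{\pm}$ the first edge of the geodesic from $o$ is the $a_i^{\pm}$-edge, so that the computation $\Delta_{\bf p} j(o,\xi)^s/j(o,\xi)^s = c_i({\bf r},s,{\bf p})$ preceding Proposition \ref{existence of zero of c} applies uniformly, and this uses that $\Delta_{\bf p}$ is a local operator (its value at $o$ depends only on $o$ and its $2n$ neighbors). Everything else is soft: positivity of the weights $w_i$ from full support of the Patterson measure, and the elementary sign behavior of the concave functions $c_i$ already recorded in Proposition \ref{existence of zero of c}.
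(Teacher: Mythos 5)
Your proof is correct and follows essentially the same route as the paper: decompose $\partial X_{\bf r}$ into the directional pieces at a vertex, apply Proposition \ref{totalmassfunction is constant} to get a positive-weighted vanishing combination of the $c_i({\bf r},\delta({\bf r}),{\bf p})$, conclude they cannot all have the same strict sign, and invoke the sign-change behavior at $s_i$ from Proposition \ref{existence of zero of c}. The extra care you take about full support of the Patterson measure and $j(o,\cdot)\equiv 1$ is correct but implicit in the paper's argument.
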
 \begin{proof}
Proposition \ref{totalmassfunction is constant} implies that, for
every vertex $x\in X_{\bf r}$, 
\[
\sum_{i=1}^{n}\int_{\partial X_{\bf r}^{i}(x)}c_{i}({\bf r},\delta({\bf r}),{\bf p})d\mu_{x}(\xi)=0,
\]
where $\partial X_{\bf r}^{i}(x)$ is the portion of $\partial X_{\bf r}$
whose points $\xi$ are in $a_{i}^{\pm1}$ directions from $x$. It
follows that the $c_{i}({\bf r},\delta({\bf r}),{\bf p})$ cannot have the same sign. By Proposition \ref{existence of zero of c},
we see that $c_{i}({\bf r},s,{\bf p})$ changes signs from positive to negative
at $s_{i}({\bf r},{\bf p})$, for each $i$. Therefore, $\delta({\bf r})$ must lie
between $\min_{1\leq i\leq n}s_{i}({\bf r},{\bf p})$ and $\max_{1\leq i\leq n}s_{i}({\bf r},{\bf p})$.
\end{proof}

By this lemma, if we have $s_{1}({\bf r},{\bf p})=\cdots=s_{n}({\bf r},{\bf p})\neq0$
for some weight ${\bf p} \in \mathcal P$, then this value coincides
with $\delta({\bf r})$. Hence, we consider simultaneous equations
\[
c_{1}({\bf r},s,{\bf p})=\cdots=c_{n}({\bf r},s,{\bf p})=0
\]
for a given ${\bf r}=(r_{1},\ldots,r_{n})$. First, we solve $c_{1}({\bf r},s,{\bf p})=\cdots=c_{n}({\bf r},s,{\bf p})$
as a system of equations of ${\bf p}$.

We change the variables from $({\bf r},s)$ 
to ${\bf u}=(u_{1},\ldots,u_{n})$ by $u_{i}=r_{i}^{s}$ for $i=1,\ldots,n$.
This correspondence defines a diffeomorphism 
\[
H:\mathcal R \times(0,\infty)\to(0,1)^{n}.
\]
We also set $c_{i}({\bf r},s,{\bf p})=c_{i}({\bf u},{\bf p})$
$(i=1,\ldots,n)$ (by the same notation) under this correspondence.
Namely, 
\[
c_{i}({\bf u},{\bf p})  =1-2\sum_{k\neq i}u_{k}p_{k}-u_{i}^{-1}p_{i}-u_{i}p_{i}
  =1-2\sum_{k=1}^nu_{k}p_{k}-(u_{i}^{-1}-u_{i})p_{i}.
\]

\begin{lemma}\label{newformulation}
Given ${\bf u}=(u_{1},\ldots,u_{n})\in(0,\infty)^{n}$, we consider
the  system of linear equations 
\[
c_{1}({\bf u},{\bf p})=\cdots=c_{n}({\bf u},{\bf p})
\]
for ${\bf p}=(p_1, \ldots, p_n)$ with $p_1+\cdots +p_n=1/2$
and let 
\[
D=D({\bf u}):=\sum_{j}\prod_{k\neq j}(u_{k}^{-1}-u_{k}).
\]

$(1)$ If $D\neq0$ then there exists a unique solution ${\bf p}={\bf p}({\bf u})$ given
by 
\[
p_{i}=p_{i}({\bf u})=\frac{\prod_{k\neq i}(u_{k}^{-1}-u_{k})}{2\sum_{j}\prod_{k\neq j}(u_{k}^{-1}-u_{k})}\quad(i=1,\ldots,n).
\]
The common value $\lambda=\lambda({\bf u}):=c_{1}({\bf u},{\bf p})=\cdots=c_{n}({\bf u},{\bf p})$
is given by
\begin{align*}
\lambda&=D^{-1}\left(\sum_{j}(1-u_{j})\prod_{k\neq j}(u_{k}^{-1}-u_{k})\ -\frac{1}{2}\prod_{\ell}(u_{\ell}^{-1}-u_{\ell})\right)\\
&=D^{-1}\prod_{\ell}(u_{\ell}^{-1}-u_{\ell})\left(\sum_{j}\left(\frac{u_{j}}{1+u_{j}}\right)-\frac{1}{2}\right).
\end{align*}
Moreover, there exists at most one $j$ such that $u_{j}=1$, and
in that case we have that the solution is given by $p_{j}=1/2$ and $p_i=0$ for all $i \neq j$ with
$\lambda=0$. 

$(2)$ If there exists a solution ${\bf p} \in \mathcal P$ 
(i.e., $p_{i} > 0$ for all $i$), then either $u_{i}=1$ for all $i$, $u_{i}>1$ for all $i$,
or $u_{i} <1$ for all $i$. In the first case, $D=0$ and
every ${\bf p}$ is a solution with $\lambda=0$. In the second case,
$D\neq0$ and the above formulas hold with $\lambda<0$. In the third case,
$D\neq0$ and the above formulas hold but the sign of $\lambda$ is indefinite.
\end{lemma}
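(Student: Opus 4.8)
The plan is to exploit that the term $1-2\sum_{k=1}^{n}u_{k}p_{k}$ is common to every $c_{i}({\bf u},{\bf p})$, so that the system $c_{1}({\bf u},{\bf p})=\cdots=c_{n}({\bf u},{\bf p})$ is equivalent to the single requirement that the quantity $(u_{i}^{-1}-u_{i})p_{i}$ be independent of $i$; denote this common value by $\mu$. First I would treat the generic case $u_{i}\neq1$ for all $i$: then $p_{i}=\mu/(u_{i}^{-1}-u_{i})$, and the normalization $\sum_{i}p_{i}=1/2$ forces $\mu\sum_{i}(u_{i}^{-1}-u_{i})^{-1}=1/2$. Using the identity $\sum_{i}(u_{i}^{-1}-u_{i})^{-1}=D\big/\prod_{k}(u_{k}^{-1}-u_{k})$ this gives $\mu=\prod_{k}(u_{k}^{-1}-u_{k})\big/(2D)$, hence the stated formula for $p_{i}=p_{i}({\bf u})$; this computation requires $D\neq0$, and when $D\neq0$ the displayed one-parameter family of solutions of the homogeneous system meets the affine hyperplane $\sum_{i}p_{i}=1/2$ in exactly one point, which gives uniqueness.

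Next I would deal with the indices where $u_{j}=1$. Such a $j$ forces $(u_{j}^{-1}-u_{j})p_{j}=0=\mu$, hence $(u_{i}^{-1}-u_{i})p_{i}=0$ for all $i$; moreover if two distinct indices $j_{1}\neq j_{2}$ had $u_{j_{1}}=u_{j_{2}}=1$, then every product $\prod_{k\neq j}(u_{k}^{-1}-u_{k})$ would contain a zero factor (since at most one of $j_{1},j_{2}$ equals $j$), whence $D=0$; so under $D\neq0$ there is at most one such $j$. If exactly one index $j$ has $u_{j}=1$, then $u_{k}\neq1$ for $k\neq j$, so $D=\prod_{k\neq j}(u_{k}^{-1}-u_{k})\neq0$, the formula for $p_{i}$ again yields $p_{j}=1/2$ and $p_{i}=0$ $(i\neq j)$, and substituting into $c_{j}$ gives $\lambda=1-2u_{j}p_{j}=0$. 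For the common value $\lambda$ in general I would substitute ${\bf p}({\bf u})$ into $\lambda=c_{i}=1-2\sum_{k}u_{k}p_{k}-\mu$, replace $1$ by $D^{-1}\sum_{k}\prod_{k'\neq k}(u_{k'}^{-1}-u_{k'})$ and collect terms to obtain the first displayed formula; the second then follows from the elementary identity $(1-u_{j})/(u_{j}^{-1}-u_{j})=u_{j}/(1+u_{j})$ (valid because $u_{j}>0$, $u_{j}\neq1$), which pulls the factor $\prod_{\ell}(u_{\ell}^{-1}-u_{\ell})$ out of the sum.

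For part (2), I would again start from the reduction of the first paragraph: if $p_{i}>0$ for every $i$, then $(u_{i}^{-1}-u_{i})p_{i}=\mu$ has the same sign as $u_{i}^{-1}-u_{i}$, so the numbers $u_{i}^{-1}-u_{i}$ are either all zero, all positive, or all negative; since $u_{i}^{-1}-u_{i}$ is positive, zero, or negative exactly when $u_{i}<1$, $u_{i}=1$, or $u_{i}>1$, this yields the trichotomy. If $u_{i}=1$ for all $i$ then $c_{i}=1-2\sum_{k}p_{k}=0$ for every admissible ${\bf p}$, and $D=0$ since $n\ge2$. In the other two cases every factor $u_{k}^{-1}-u_{k}$ is nonzero of a fixed sign, so $D\neq0$, the solution is unique and hence equals ${\bf p}({\bf u})$, and the formulas of part (1) hold; the sign of $\lambda$ is read off the second formula $\lambda=D^{-1}\prod_{\ell}(u_{\ell}^{-1}-u_{\ell})\big(\sum_{j}u_{j}/(1+u_{j})-1/2\big)$. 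When $u_{i}>1$ for all $i$, $D$ has the sign of $(-1)^{n-1}$ and $\prod_{\ell}(u_{\ell}^{-1}-u_{\ell})$ the sign of $(-1)^{n}$, so the prefactor is negative, while $u_{j}/(1+u_{j})>1/2$ gives $\sum_{j}u_{j}/(1+u_{j})>n/2\ge1$; hence $\lambda<0$. When $u_{i}<1$ for all $i$, $D>0$ and $\prod_{\ell}(u_{\ell}^{-1}-u_{\ell})>0$, so $\lambda$ has the sign of $\sum_{j}u_{j}/(1+u_{j})-1/2$, and since $u_{j}/(1+u_{j})$ ranges over all of $(0,1/2)$ this sum can be below, equal to, or above $1/2$, so the sign of $\lambda$ is indefinite.

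No genuinely hard step is involved: the lemma is linear algebra together with sign bookkeeping. The points that need care are checking that ``at most one $u_{j}=1$'' is precisely what $D\neq0$ enforces, tracking the parities so that the sign of $D^{-1}\prod_{\ell}(u_{\ell}^{-1}-u_{\ell})$ comes out $(-1)^{n-1}(-1)^{n}=-1$ regardless of $n$, and carrying out the algebraic simplification that turns $\lambda=1-2\sum_{k}u_{k}p_{k}-\mu$ into the closed form involving $\sum_{j}u_{j}/(1+u_{j})$.
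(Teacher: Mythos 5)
Your proof is correct and follows essentially the same route as the paper's: reduce the system $c_1=\cdots=c_n$ to the single condition that $(u_i^{-1}-u_i)p_i$ be a common constant $\tau$ (you call it $\mu$), solve using the normalization $\sum p_i=1/2$, and then read off the sign information from the closed-form expressions for $D$ and $\lambda$. The only difference is one of detail: the paper's proof is terser, disposing of the sign analysis in part (2) with the single sentence that ``the other assertions follow from the representations of $D$ and $\lambda$,'' whereas you carry out the parity bookkeeping $(-1)^{n-1}\cdot(-1)^{n}=-1$ and the inequality $u_j/(1+u_j)>1/2$ explicitly; that is exactly what the paper is implicitly invoking.
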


\begin{proof}
If $D\neq0$, then existence and uniqueness of solutions follows by
verifying that $D$ is the determinant of the system of equations.
More explicitly, we can solve these equations as follows.
We first note that
$c_{1}({\bf u},{\bf p})=\cdots=c_{n}({\bf u},{\bf p})$
is equivalent to 
\begin{equation}\label{reduced-equation}
(u_{1}^{-1}-u_{1})p_{1}=(u_{2}^{-1}-u_{2})p_{2}=\dots=(u_{n}^{-1}-u_{n})p_{n}. 
\end{equation}
We set this common value as $\tau$. If $\tau \neq 0$, then we have $p_i=\tau/(u_{i}^{-1}-u_{i})$ for all $i$. 
Since $\sum_j p_j=1/2$, it follows that
$\tau \sum_j (u_{j}^{-1}-u_{j})^{-1}=1/2$. Hence,
\[
p_i=\frac{(u_{i}^{-1}-u_{i})^{-1}}{2 \sum_j (u_{j}^{-1}-u_{j})^{-1}}=
\frac{\prod_{k\neq i}(u_{k}^{-1}-u_{k})}{2\sum_{j}\prod_{k\neq j}(u_{k}^{-1}-u_{k})}.
\]
Since  $D \neq 0$, it is clear that there exists at most one $j$ with $u_{j}=1$.
If so, then $\tau=0$, $p_j=1/2$ and $p_i=0$ for all $i \neq j$, which also satisfies the above formulas for  $p_i$.
The common value $\lambda$ is obtained by substituting these solutions to any of $c_{i}({\bf u},{\bf p})$.

To prove (2), suppose that there exists a solution $\bf p$ with $p_{i}>0$.
Then, according to the value $\tau$ of (\ref{reduced-equation}), we have that $u_{i}=1$, $u_{i}>1$,
or $u_{i} <1$ for all $i$ simultaneously.
The other assertions follow from the representations of $D$ and $\lambda$. 
\end{proof}

By this lemma, the original problem to obtain $\delta({\bf r})$ is
reduced to finding a system of solutions of the equation $\lambda({\bf u})=0$
concerning ${\bf u} \in (0,1)^n$. Then by $H^{-1}{\bf u}$, we have a system
of equations for ${\bf r}$ and $s$. From this, for a given ${\bf r} \in \mathcal R$, we can obtain the exponent $s\in(0,\infty)$ which is equal to $\delta({\bf r})$.

For this purpose, we give 
another representation of $\lambda({\bf u})$ obtained in Lemma \ref{newformulation}
as follows: 
\begin{align*}
\lambda({\bf u}) & =D({\bf u})^{-1}\left(\sum_{j}(1-u_{j})\prod_{k\neq j}(u_{k}^{-1}-u_{k})\ -\frac{1}{2}\prod_{\ell}(u_{\ell}^{-1}-u_{\ell})\right)\\
 & =\frac{\prod_{i}u_{i}\cdot\left(\sum_{j}(1-u_{j})\prod_{k\neq j}(u_{k}^{-1}-u_{k})\ -\frac{1}{2}\prod_{\ell}(u_{\ell}^{-1}-u_{\ell})\right)}{\prod_{i}u_{i}\cdot D({\bf u})}\\
 & =\frac{\left(\sum_{j}(1-u_{j})u_{j}\prod_{k\neq j}(1-u_{k}^{2})\ -\frac{1}{2}\prod_{\ell}(1-u_{\ell}^{2})\right)}{\prod_{i}u_{i}\cdot D({\bf u})}\\
 & =\frac{\prod_j(1-u_{j})\left(2\sum_{j}u_{j}\prod_{k\neq j}(1+u_{k})\ -\prod_{\ell}(1+u_{\ell})\right)}{2\prod_{i}u_{i}\cdot D({\bf u})}.
\end{align*}
Here, we define 
\[
l({\bf u}):=2\sum_{j}u_{j}\prod_{k\neq j}(1+u_{k})-\prod_{\ell}(1+u_{\ell}).
\]
Then, $\lambda({\bf u})=0$ is equivalent to $l({\bf u})=0$ for ${\bf u} \in (0,1)^n$.

\begin{proposition}\label{delta}
For every ${\bf r} \in \mathcal R$, $\delta({\bf r})$  is the unique $s > 0$
such that ${\bf u}=H({\bf r},s)$ satisfies $l({\bf u})=0$.
\end{proposition}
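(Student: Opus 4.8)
The plan is to deduce the statement from Lemmas \ref{newformulation} and \ref{delta is zero} together with Proposition \ref{existence of zero of c}; I would treat uniqueness and the identification $s=\delta({\bf r})$ first, and existence afterwards. Suppose $s>0$ is such that ${\bf u}=H({\bf r},s)$, i.e. $u_i=r_i^s$, satisfies $l({\bf u})=0$. Since $r_i<1/2<1$ and $s>0$, each coordinate satisfies $u_i\in(0,1)$, hence $u_i^{-1}-u_i>0$ for all $i$ and therefore $D({\bf u})=\sum_j\prod_{k\neq j}(u_k^{-1}-u_k)>0$; in particular $D\neq0$. By the equivalence recorded just before the proposition, $l({\bf u})=0$ forces $\lambda({\bf u})=0$, so Lemma \ref{newformulation}(1) produces the weight ${\bf p}={\bf p}({\bf u})$ with $p_i=\prod_{k\neq i}(u_k^{-1}-u_k)/(2D({\bf u}))$ satisfying $c_i({\bf u},{\bf p})=\lambda({\bf u})=0$ for every $i$. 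Because numerator and denominator of $p_i$ are positive, ${\bf p}\in\mathcal P$. Undoing the change of variables $H$, this says $c_i({\bf r},s,{\bf p})=0$ for all $i$, so $s$ is a nonzero zero of each $c_i({\bf r},\cdot,{\bf p})$; by Proposition \ref{existence of zero of c} this zero is unique, whence $s=s_1({\bf r},{\bf p})=\cdots=s_n({\bf r},{\bf p})$. Lemma \ref{delta is zero} then gives $\delta({\bf r})=s$. As $\delta({\bf r})$ depends only on ${\bf r}$, this shows at once that at most one such $s$ exists and that it must equal $\delta({\bf r})$.

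For existence I would apply the intermediate value theorem to the continuous function $g(s):=l(H({\bf r},s))=l(r_1^s,\ldots,r_n^s)$ of $s\in(0,\infty)$. As $s\to 0^+$ every $u_i=r_i^s\to1$, so $g(s)\to 2\sum_j\prod_{k\neq j}2-\prod_\ell 2=n\cdot2^n-2^n=(n-1)2^n>0$ since $n\geq2$; as $s\to\infty$ every $u_i\to0$, so $g(s)\to-\prod_\ell 1=-1<0$. Hence $g$ vanishes at some $s_0\in(0,\infty)$, i.e. ${\bf u}=H({\bf r},s_0)$ satisfies $l({\bf u})=0$, and by the first part $s_0=\delta({\bf r})$, which also yields $0<\delta({\bf r})<\infty$.

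Most of this is bookkeeping once the three cited results are in hand. The one point I would be careful about is that the weight ${\bf p}({\bf u})$ attached to a zero ${\bf u}\in(0,1)^n$ genuinely lies in $\mathcal P$, i.e. all of its coordinates are strictly positive: this is exactly where the sign information $u_i^{-1}-u_i>0$, valid precisely because $r_i<1$, enters, and it is what permits the appeal to Lemma \ref{delta is zero}. I do not expect any genuinely difficult step beyond this.
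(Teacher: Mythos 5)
Your proof is correct, and at its core it takes the same route as the paper: compute the boundary values of $g(s)=l\circ H({\bf r},s)$ (namely $(n-1)2^n>0$ as $s\to 0^+$ and $-1$ as $s\to\infty$) to get existence of a zero, and then invoke Lemma~\ref{delta is zero} to identify the zero with $\delta({\bf r})$. The one genuine variation is how uniqueness is handled: the paper asserts (without detail) that $g$ is strictly decreasing, which gives existence and uniqueness simultaneously, whereas you sidestep monotonicity entirely by observing that the identification argument itself --- via Lemma~\ref{newformulation}(1), the positivity of $D({\bf u})$ and of the resulting weight ${\bf p}({\bf u})\in\mathcal P$, Proposition~\ref{existence of zero of c}, and Lemma~\ref{delta is zero} --- forces \emph{any} zero of $g$ to equal $\delta({\bf r})$. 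This is a small but real simplification, since the paper's ``easy to see'' monotonicity claim would otherwise require a separate verification. You also make explicit the check that ${\bf p}({\bf u})$ has all coordinates strictly positive when ${\bf u}\in(0,1)^n$, which the paper leaves implicit but which is exactly what licenses the appeal to Lemma~\ref{delta is zero}; that is worth having written down.
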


\begin{proof}
We will find $s > 0$ such that $l(H({\bf r},s))=0$.
For a fixed ${\bf r}$, it is easy to see that $l \circ H({\bf r},s))$ is a strictly decreasing continuous function
such that $\lim_{s \to 0} l \circ H({\bf r},s))>0$ and $\lim_{s \to \infty} l \circ H({\bf r},s))=-1$. Hence, such an $s$ uniquely exists. That $s=\delta({\bf r})$ follows from Lemma \ref{delta is zero}.
\end{proof}

\begin{proof}[Proof of Theorem \ref{main1}]
By expanding $l({\bf u})$, we have
\[
l({\bf u})=-1+\sum_{i}u_{i}+3\sum_{(i_{1},i_{2})}u_{i_{1}}u_{i_{2}}+5\sum_{(i_{1},i_{2},i_{3})}u_{i_{1}}u_{i_{2}}u_{i_{3}}+\cdots+(2n-1)u_{1}\cdots u_{n}.
\] 
Then, the statement follows from Proposition \ref{delta}.
\end{proof}

\section{$\lambda_0$ in terms of $\delta$ on ${\rm Cay}(F_n)$}
In this section, we will prove Theorem \ref{main2}. To this end,
we consider the maximal value of $\lambda({\bf u})$ for ${\bf u} \in (0,1)^n$ under a constraint condition
${\bf p}({\bf u})={\bf p}_0$ for some fixed ${\bf p}_0=(p_1,\ldots, p_n) \in \mathcal P$. 
We note that the condition 
${\bf p}({\bf u})={\bf p}_0$ is equivalent to
$c_1({\bf u}, {\bf p}_0)=\cdots =c_n({\bf u}, {\bf p}_0)$, which is further equivalent to
\[
(u_1^{-1}-u_1) p_1=(u_2^{-1}-u_2) p_2=\cdots =(u_n^{-1}-u_n) p_n 
\]
for ${\bf u}=(u_1,\ldots ,u_n)$ by (\ref{reduced-equation}). 

Putting the common value of these equations as $\tau \in (0,\infty)$,
we can solve $u_i \in (0,1)$ for each $i$ as
\begin{equation}\label{u-function}
u_i=u_i(\tau)=\frac{1}{2}(\sqrt{\tau^2 p_i^{-2}+4}-\tau p_i^{-1}).
\end{equation}
Then, we have a smooth curve $\gamma_{{\bf p}_0}(\tau):=(u_1(\tau),\ldots,u_n(\tau))$ $(0<\tau<\infty)$ in $(0,1)^n$ such that
\[
\{\gamma_{{\bf p}_0}(\tau) \mid \tau \in (0,\infty)\}=\{{\bf u} \mid {\bf p}({\bf u})={\bf p}_0\}.
\]
Moreover,
$\lim_{\tau \to 0} \gamma_{{\bf p}_0}(\tau)=(1,\ldots,1)$ and $\lim_{\tau \to \infty} \gamma_{{\bf p}_0}(\tau)=(0,\ldots,0)$.

\begin{proposition}\label{derivative}
For every ${\bf p}_0=(p_1,\ldots, p_n) \in \mathcal P$, the function $\lambda \circ \gamma_{{\bf p}_0}(\tau)$ on $(0,\infty)$ takes the unique maximum 
at $\tau_0$ where the derivative
\[
(\lambda \circ \gamma_{{\bf p}_0})'(\tau)=-\sum_i \frac{\tau}{\sqrt{\tau^2+4p_i^2}}+(n-1)
\] 
vanishes. Moreover, $\lambda \circ \gamma_{{\bf p}_0}(\tau_0)>0$.
\end{proposition}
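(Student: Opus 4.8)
The plan is to parametrize everything through the curve $\gamma_{{\bf p}_0}(\tau)$ and reduce the optimization to a one-variable calculus problem. First I would substitute the explicit formula \eqref{u-function} for $u_i(\tau)$ into the expression for $\lambda({\bf u})$. The key elementary observation is that from $u_i = \tfrac12(\sqrt{\tau^2 p_i^{-2}+4}-\tau p_i^{-1})$ one has $u_i^{-1}-u_i = \tau p_i^{-1}$ (this is just the quadratic relation $u_i^2 + \tau p_i^{-1} u_i - 1 = 0$ rearranged), and also $u_i^{-1}+u_i = \sqrt{\tau^2 p_i^{-2}+4} = p_i^{-1}\sqrt{\tau^2+4p_i^2}$. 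Using these, the common value $\lambda \circ \gamma_{{\bf p}_0}(\tau)$ can be written directly from any one of the equations $c_i({\bf u},{\bf p}_0)$: indeed $c_i = 1 - 2\sum_k u_k p_k - (u_i^{-1}-u_i)p_i = 1 - 2\sum_k u_k p_k - \tau$. So I would establish the clean formula
\[
\lambda \circ \gamma_{{\bf p}_0}(\tau) = 1 - \tau - 2\sum_{k=1}^n p_k u_k(\tau).
\]

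Next I would differentiate this in $\tau$. Writing $2p_k u_k(\tau) = \sqrt{\tau^2+4p_k^2} - \tau$ (clearing the $p_k^{-1}$ against the $\tfrac12$ and the $p_k$ factor), we get $2\sum_k p_k u_k(\tau) = \sum_k \sqrt{\tau^2+4p_k^2} - n\tau$, hence
\[
\lambda \circ \gamma_{{\bf p}_0}(\tau) = 1 - \tau - \sum_{k=1}^n \sqrt{\tau^2+4p_k^2} + n\tau = 1 + (n-1)\tau - \sum_{k=1}^n \sqrt{\tau^2+4p_k^2}.
\]
Differentiating gives exactly $(\lambda\circ\gamma_{{\bf p}_0})'(\tau) = (n-1) - \sum_i \tau/\sqrt{\tau^2+4p_i^2}$, as claimed. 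For the uniqueness of the critical point and that it is a maximum, I would note that each summand $\tau/\sqrt{\tau^2+4p_i^2}$ is strictly increasing in $\tau$ on $(0,\infty)$ (its derivative is $4p_i^2(\tau^2+4p_i^2)^{-3/2}>0$), so the derivative $(\lambda\circ\gamma_{{\bf p}_0})'$ is strictly decreasing; moreover its value at $\tau\to 0^+$ is $n-1>0$ and at $\tau\to\infty$ it tends to $(n-1)-n = -1 < 0$. Hence there is a unique zero $\tau_0$, the function $\lambda\circ\gamma_{{\bf p}_0}$ is strictly concave with a unique interior maximum at $\tau_0$.

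Finally, for $\lambda\circ\gamma_{{\bf p}_0}(\tau_0)>0$: since the maximum dominates the value at any other point, it suffices to exhibit one $\tau$ with $\lambda\circ\gamma_{{\bf p}_0}(\tau)>0$, or to use that $\lambda\circ\gamma_{{\bf p}_0}(0^+) = 1 - \sum_k 2p_k = 1 - 1 = 0$ together with $(\lambda\circ\gamma_{{\bf p}_0})'(0^+) = n-1 > 0$, so $\lambda\circ\gamma_{{\bf p}_0}$ is strictly positive just to the right of $0$, and therefore its maximum value is strictly positive. I expect the only mildly delicate point to be bookkeeping the algebraic identities $u_i^{-1}-u_i = \tau p_i^{-1}$ and the resulting collapse of $\lambda$ into a one-line formula; once that is in hand the concavity and sign statements are routine. (One should also double-check consistency with the parameter range: as $\tau$ ranges over $(0,\infty)$, $u_i(\tau)$ ranges over $(0,1)$, matching the limits $\gamma_{{\bf p}_0}(0^+)=(1,\dots,1)$, $\gamma_{{\bf p}_0}(\infty)=(0,\dots,0)$ already recorded above, so the curve genuinely sweeps out $\{{\bf u}\mid {\bf p}({\bf u})={\bf p}_0\}$ and the maximum found is the maximum of $\lambda$ over that constraint set.)
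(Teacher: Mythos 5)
Your proof is correct and follows essentially the same route as the paper: substituting the explicit formula for $u_i(\tau)$ into $\lambda \circ \gamma_{{\bf p}_0}(\tau) = c_i({\bf u},{\bf p}_0)$ to obtain $1+(n-1)\tau-\sum_k\sqrt{\tau^2+4p_k^2}$, differentiating, and analyzing monotonicity and endpoint limits. You supply somewhat more detail than the paper on the final positivity step (the paper simply says ``the statement then follows easily''), but the argument is the same.
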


\begin{proof}
By substituting (\ref{u-function}) to $\lambda({\bf u})=c_i({\bf u},{\bf p}_0)$, we have that
\begin{align*}
\lambda \circ \gamma_{{\bf p}_0}(\tau)
&=1-2 \sum_i\frac{1}{2}(\sqrt{\tau^2 p_i^{-2}+4}-\tau p_i^{-1})p_i-\tau\\
&=1-\sum_i\sqrt{\tau^2 +4p_i^2}+(n-1)\tau.
\end{align*}
Then $\lim_{\tau \to 0}\lambda \circ \gamma_{{\bf p}_0}(\tau)=0$ and 
$\lim_{\tau \to \infty}\lambda \circ \gamma_{{\bf p}_0}(\tau)=-\infty$.

Moreover, the derivative of $\lambda \circ \gamma_{{\bf p}_0}(\tau)$ is
\[
(\lambda \circ \gamma_{{\bf p}_0})'(\tau)
=-\sum_i \frac{\tau}{\sqrt{\tau^2+4p_i^2}}+(n-1).
\]
This is a strictly decreasing continuous function from a positive $\lim_{\tau \to 0}(\lambda \circ \gamma_{{\bf p}_0})'(\tau)=n-1$
to a negative $\lim_{\tau \to \infty}(\lambda \circ \gamma_{{\bf p}_0})'(\tau)=-1$.
The statement then follows easily.
\end{proof}

The following claim shows the way of choosing ${\bf r} \in \mathcal R$ corresponding to ${\bf p} \in \mathcal P$.

\begin{lemma}\label{maximum2} 
For every ${\bf p}_0 \in \mathcal P$, 
assume that the function $\lambda \circ \gamma_{{\bf p}_0}(\tau)$ for $\tau \in (0,\infty)$ takes the unique maximum
at $\tau_0$. Then, there exists a unique ${\bf r}_0 \in \mathcal R$
such that $H({\bf r}_0,\delta({\bf r}_0)/2)=\gamma_{{\bf p}_0}(\tau_0)$.
\end{lemma}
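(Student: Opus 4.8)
The plan is to reduce the assertion to the characterization of $\delta(\cdot)$ given in Proposition \ref{delta}. Uniqueness of ${\bf r}_0$ is immediate: since $H\colon\mathcal R\times(0,\infty)\to(0,1)^n$ is a diffeomorphism, if ${\bf r}_0,{\bf r}_1\in\mathcal R$ both satisfy $H({\bf r}_j,\delta({\bf r}_j)/2)=\gamma_{{\bf p}_0}(\tau_0)$, then injectivity of $H$ forces $({\bf r}_0,\delta({\bf r}_0)/2)=({\bf r}_1,\delta({\bf r}_1)/2)$ and hence ${\bf r}_0={\bf r}_1$. So the content is existence. Put ${\bf u}^{*}:=\gamma_{{\bf p}_0}(\tau_0)\in(0,1)^n$, let $({\bf r}_0,s_0):=H^{-1}({\bf u}^{*})$ so that $u^{*}_i=r_{0,i}^{\,s_0}$ for all $i$, and set $v_i:=(u^{*}_i)^2$, ${\bf v}:=(v_1,\dots,v_n)\in(0,1)^n$. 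Then ${\bf v}=H({\bf r}_0,2s_0)$, so by Proposition \ref{delta} it suffices to show $l({\bf v})=0$: this yields $2s_0=\delta({\bf r}_0)$, and therefore $H({\bf r}_0,\delta({\bf r}_0)/2)=H({\bf r}_0,s_0)={\bf u}^{*}=\gamma_{{\bf p}_0}(\tau_0)$, as required.

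To prove $l({\bf v})=0$ I would first rewrite the critical-point condition for $\tau_0$ in terms of ${\bf v}$. Along the curve $\gamma_{{\bf p}_0}$ the coordinates satisfy $(u_i^{-1}-u_i)p_i=\tau$ by (\ref{reduced-equation}), so with $t_i:=\tau_0/p_i$ and $u^{*}_i$ as in (\ref{u-function}) one reads off the elementary identities $1-v_i=t_i u^{*}_i$ and $1+v_i=\sqrt{t_i^2+4}\,u^{*}_i$, hence
\[
\frac{1-v_i}{1+v_i}=\frac{t_i}{\sqrt{t_i^2+4}}=\frac{\tau_0}{\sqrt{\tau_0^2+4p_i^2}}\qquad(i=1,\dots,n).
\]
Since $\tau_0$ is an interior maximum of $\lambda\circ\gamma_{{\bf p}_0}$, Proposition \ref{derivative} gives $(\lambda\circ\gamma_{{\bf p}_0})'(\tau_0)=0$, i.e.\ $\sum_i\tau_0/\sqrt{\tau_0^2+4p_i^2}=n-1$; summing the displayed identities therefore yields $\sum_i(1-v_i)/(1+v_i)=n-1$, which (using $\frac{1-v_i}{1+v_i}=1-\frac{2v_i}{1+v_i}$) rearranges to
\[
\sum_{i=1}^n\frac{v_i}{1+v_i}=\frac12 .
\]

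Finally I would feed this into the second expression for the common value $\lambda$ in Lemma \ref{newformulation},
\[
\lambda({\bf v})=D({\bf v})^{-1}\prod_{\ell}(v_\ell^{-1}-v_\ell)\left(\sum_{j}\frac{v_j}{1+v_j}-\frac12\right).
\]
As every $v_\ell$ lies in $(0,1)$, we have $v_\ell^{-1}-v_\ell>0$, so $\prod_{\ell}(v_\ell^{-1}-v_\ell)>0$ and $D({\bf v})=\sum_j\prod_{k\neq j}(v_k^{-1}-v_k)>0$; the identity just obtained thus forces $\lambda({\bf v})=0$, and hence $l({\bf v})=0$ by the equivalence noted just before Proposition \ref{delta}. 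Together with the uniqueness remark, this proves the lemma. The only step that is not routine bookkeeping is the middle one: one has to notice that, after the substitution $v_i=u_i(\tau_0)^2$, the critical-point equation $(\lambda\circ\gamma_{{\bf p}_0})'(\tau_0)=0$ is precisely the zero-locus condition $\sum_j v_j/(1+v_j)=\frac12$ for $\lambda$ on $(0,1)^n$; granting that, Proposition \ref{delta} and the diffeomorphism $H$ finish the argument.
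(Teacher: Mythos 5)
Your proof is correct and follows the same overall strategy as the paper: set ${\bf u}_0=\gamma_{{\bf p}_0}(\tau_0)$, write $({\bf r}_0,s_0)=H^{-1}({\bf u}_0)$, show $l({\bf u}_0^2)=0$, and invoke Proposition \ref{delta} to get $2s_0=\delta({\bf r}_0)$. The only divergence is the middle step: the paper converts $(\lambda\circ\gamma_{{\bf p}_0})'(\tau_0)=0$ to $-\sum_i\frac{1-u_i^2}{1+u_i^2}+(n-1)=0$ and then expands and regroups the cleared numerator until the polynomial $2\sum_i u_i^2\prod_{k\neq i}(1+u_k^2)-\prod_k(1+u_k^2)=l({\bf u}_0^2)$ appears by hand, whereas you observe that the same identity rearranges to $\sum_j v_j/(1+v_j)=\frac12$ with $v_j=u_{0,j}^2$ and then reuse the factored formula $\lambda({\bf v})=D({\bf v})^{-1}\prod_\ell(v_\ell^{-1}-v_\ell)\bigl(\sum_j v_j/(1+v_j)-\frac12\bigr)$ from Lemma \ref{newformulation} (noting $D({\bf v})>0$ since ${\bf v}\in(0,1)^n$) to conclude $\lambda({\bf v})=0$ and hence $l({\bf v})=0$. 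Your route is slightly more economical because it avoids re-deriving a polynomial identity the earlier lemma already encodes, and you also make the uniqueness of ${\bf r}_0$ explicit (via injectivity of $H$), a point the paper's proof leaves implicit.
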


\begin{proof}
Set $\gamma_{{\bf p}_0}(\tau_0)={\bf u}_0=(u_1,\ldots,u_n)$.
Then we have
\[
(u_1^{-1}-u_1) p_1=(u_2^{-1}-u_2) p_2=\cdots =(u_n^{-1}-u_n) p_n =\tau_0
\]
for ${\bf p}_{0}=(p_1,\ldots,p_n)$. 
Since  $\tau_0^{-1}p_i=u_i/(1-u_i^2)$ for all $i$, which follows from the above equations, we have that
\begin{align*}
(\lambda \circ \gamma_{{\bf p}_0})'(\tau_0)
&=-\sum_i \frac{1}{\sqrt{1+4(\tau_0^{-1} p_i)^2}}+(n-1)\\
&=-\sum_i \frac{1-u_i^2}{\sqrt{(1-u_i^2)^2+4u_i^2}}+(n-1)\\
&=-\sum_i \frac{1-u_i^2}{1+u_i^2}+(n-1)\\
&=\frac{-\sum_i(1-u_i^2) \prod_{k \neq i}(1+u_k^2)+(n-1) \prod_k(1+u_k^2)}{\prod_k(1+u_k^2)}.
\end{align*}
Since $(\lambda \circ \gamma_{{\bf p}_0})'(\tau_0)=0$, we have that the numerator 
\begin{align*}
&\quad -\sum_i(1-u_i^2) \prod_{k \neq i}(1+u_k^2)+(n-1) \prod_k(1+u_k^2)\\
&=2 \sum_i u_i \prod_{k \neq i}(1+u_k^2)-\prod_k(1+u_k^2)\\ 
&\quad +\left(- \sum_i  \prod_{k \neq i}(1+u_k^2) - \sum_i u_i^2 \prod_{k \neq i}(1+u_k^2) +n\prod_k(1+u_k^2) \right) \\
&=2 \sum_i u_i^2 \prod_{k \neq i}(1+u_k^2)-\prod_k(1+u_k^2).
\end{align*}
is equal to zero. We define $({\bf r}_0,s_0)$ to be $H^{-1}({\bf u}_0)$ and set ${\bf r}_0=(r_1,\ldots,r_n)$.
By the definition of the function $l$, we have that 
$l \circ H({\bf r}_0,2s_0)=0$. This implies that $2s_0=\delta({\bf r}_0)$ by Proposition \ref{delta}.
Hence, ${\bf u}_0=H({\bf r}_0,\delta({\bf r}_0)/2)$.
\end{proof}

\begin{remark}
The above proof also implies that if ${\bf u}_0$ is given by $H({\bf r}_0,\delta({\bf r}_0))$ for 
any ${\bf r}_0 \in \mathcal R$, then $\lambda({\bf u})$ takes the maximum at ${\bf u}_0$ under the constraint condition
${\bf p}({\bf u})={\bf p}_0:={\bf p}({\bf u}_0)$.
The fact that ${\bf u}_0$ is the critical point of
$\lambda({\bf u})$ is also verified by the method of Lagrange multiplier
without using $\lambda \circ \gamma_{{\bf p}_0}(\tau)$.
We note that since ${\bf p}=(p_{1},\ldots,p_{n})$
satisfies $p_{1}+\cdots+p_{n}=1/2$, the constraint condition can be determined
only by $p_{1},\ldots,p_{n-1}$. If $\lambda({\bf u})$ attains a constraint local maximum or minimum
at ${\bf u}_0$, then
${\bf u}_0=(u_{1},\ldots,u_{n})$ must satisfy 
\[
{\rm det}\left[\begin{array}{rrrr}
\frac{\partial p_{1}}{\partial u_{1}}({\bf u}) & \cdots & \frac{\partial p_{n-1}}{\partial u_{1}}({\bf u}) & \frac{\partial\lambda}{\partial u_{1}}({\bf u})\\
\frac{\partial p_{1}}{\partial u_{2}}({\bf u}) & \cdots & \frac{\partial p_{n-1}}{\partial u_{2}}({\bf u}) & \frac{\partial\lambda}{\partial u_{2}}({\bf u})\\
\vdots\quad & \  & \vdots\qquad & \vdots\quad\\
\frac{\partial p_{1}}{\partial u_{n}}({\bf u}) & \cdots & \frac{\partial p_{n-1}}{\partial u_{n}}({\bf u}) & \frac{\partial\lambda}{\partial u_{n}}({\bf u})
\end{array}\right]=0.
\]
By Mathematica, we can check that this is equivalent to $l(u_{1}^{2},\ldots,u_{n}^{2})=0$.
\end{remark}

If we start from the edge length parameter ${\bf r} \in \mathcal R$, 
our main result in this section can be alternatively expressed as follows. 
This will be discussed again in the next section.

\begin{theorem}\label{main2-b}
For any ${\bf r}_0 \in {\mathcal R}$, the bottom of the spectrum  $\lambda_0({\bf p}_0)$
of the Laplacian $\Delta_{{\bf p}_0}$ for ${\bf p}_0={\bf p}\circ H({\bf r}_0,\delta({\bf r}_0)/2)$
on $X_{{\bf r}_0}$ coincides with $\lambda \circ H({\bf r}_0,\delta({\bf r}_0)/2)$.
\end{theorem}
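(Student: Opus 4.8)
The plan is to show that $\lambda\circ H({\bf r}_0,\delta({\bf r}_0)/2)$ equals the maximum over $\tau>0$ of the function $\lambda\circ\gamma_{{\bf p}_0}(\tau)$ of Proposition \ref{derivative}, and then to identify that maximum with $\lambda_0({\bf p}_0)=1-\rho({\bf p}_0)$. First I would put $s_0:=\delta({\bf r}_0)/2$, ${\bf u}_0:=H({\bf r}_0,s_0)=(u_1,\dots,u_n)\in(0,1)^n$ and ${\bf p}_0:={\bf p}({\bf u}_0)$; since every $u_i$ lies in $(0,1)$, Lemma \ref{newformulation}(2) gives $D({\bf u}_0)\neq0$ and ${\bf p}_0\in\mathcal P$, and ${\bf u}_0$ lies on the curve $\gamma_{{\bf p}_0}$, say ${\bf u}_0=\gamma_{{\bf p}_0}(\tau_{*})$ for some $\tau_{*}>0$. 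The coordinates of $H({\bf r}_0,\delta({\bf r}_0))$ are the squares $u_1^2,\dots,u_n^2$, so $l(u_1^2,\dots,u_n^2)=l\circ H({\bf r}_0,\delta({\bf r}_0))=0$ by Proposition \ref{delta}. The identity for $(\lambda\circ\gamma_{{\bf p}_0})'$ obtained inside the proof of Lemma \ref{maximum2} shows that $l(u_1^2,\dots,u_n^2)=0$ is equivalent to $(\lambda\circ\gamma_{{\bf p}_0})'(\tau_{*})=0$, so Proposition \ref{derivative} forces $\tau_{*}=\tau_0$ and
\[
\lambda\circ H({\bf r}_0,\delta({\bf r}_0)/2)=\lambda({\bf u}_0)=\max_{\tau>0}\lambda\circ\gamma_{{\bf p}_0}(\tau).
\]

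Next I would use the explicit expression $\lambda\circ\gamma_{{\bf p}_0}(\tau)=1-\sum_{i}\sqrt{\tau^{2}+4p_i^{2}}+(n-1)\tau$ from Proposition \ref{derivative}: the substitution $\tau=1/t$ turns it into $1-\frac1t\bigl(\sum_{i}\sqrt{1+4p_i^{2}t^{2}}-(n-1)\bigr)$, whence
\[
\max_{\tau>0}\lambda\circ\gamma_{{\bf p}_0}(\tau)=1-\min_{t>0}\frac1t\Bigl(\sum_{i=1}^{n}\sqrt{1+4p_i^{2}t^{2}}-(n-1)\Bigr)=1-\rho({\bf p}_0)=\lambda_0({\bf p}_0)
\]
by formula~(\ref{Woess}). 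Together with the previous display this yields $\lambda_0({\bf p}_0)=\lambda\circ H({\bf r}_0,\delta({\bf r}_0)/2)$; the uniqueness of the ${\bf r}_0$ attached to a given ${\bf p}_0$, which recasts this as the equivalent statement Theorem \ref{main2}, is exactly Lemma \ref{maximum2}.

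Since the paper wishes to recover formula~(\ref{Woess}) itself along the way, I would in addition prove $\max_{\tau>0}\lambda\circ\gamma_{{\bf p}_0}(\tau)=1-\rho({\bf p}_0)$ directly, and this is the step I expect to require real work. One inequality is painless: for each $\tau>0$ set $({\bf r}(\tau),s(\tau))=H^{-1}(\gamma_{{\bf p}_0}(\tau))$; then $x\mapsto j_{{\bf r}(\tau)}(x,\xi)^{s(\tau)}$ is a strictly positive eigenfunction of $A_{{\bf p}_0}$ with eigenvalue $1-\lambda\circ\gamma_{{\bf p}_0}(\tau)$, by the formula for $\Delta_{{\bf p}_0}j(x,\xi)^{s}/j(x,\xi)^{s}$ preceding Proposition \ref{existence of zero of c} together with Lemma \ref{newformulation}(1) (the constants $c_i$ all equal $\lambda\circ\gamma_{{\bf p}_0}(\tau)$ because ${\bf p}(\gamma_{{\bf p}_0}(\tau))={\bf p}_0$); conjugating $A_{{\bf p}_0}$ by this eigenfunction and applying $2ab\leq a^{2}+b^{2}$ to the resulting nonnegative quadratic form gives $\langle A_{{\bf p}_0}f,f\rangle\leq(1-\lambda\circ\gamma_{{\bf p}_0}(\tau))\langle f,f\rangle$ for all finitely supported $f$, hence $\rho({\bf p}_0)\leq1-\lambda({\bf u}_0)$. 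The reverse inequality is the main obstacle; I would read it off from the Green function $G(o,o\mid z)$ of the ${\bf p}_0$-walk on ${\rm Cay}(F_n)$, whose radius of convergence equals $1/\rho({\bf p}_0)$ and, on the free group, coincides with that of the first-passage generating functions $F_1(z),\dots,F_n(z)$; these satisfy $1/z=p_i(F_i+F_i^{-1})+2\sum_{j\neq i}p_jF_j$ for $i=1,\dots,n$, which under the substitution $u_i=F_i(z)$ is precisely $c_1({\bf u},{\bf p}_0)=\dots=c_n({\bf u},{\bf p}_0)=1-1/z$. The analytic branch with $F_i(0)=0$ is the image of $\tau\mapsto\gamma_{{\bf p}_0}(\tau)$ on $\tau\geq\tau_0$ under $z=(1-\lambda\circ\gamma_{{\bf p}_0}(\tau))^{-1}$; it folds back, acquiring a square-root singularity, exactly at $\tau=\tau_0$, where $z$ attains its largest value $(1-\lambda({\bf u}_0))^{-1}$. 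Hence the radius of convergence is $(1-\lambda({\bf u}_0))^{-1}$ and $\rho({\bf p}_0)=1-\lambda({\bf u}_0)$. Carrying out the bookkeeping for these generating functions and verifying the square-root singularity — routine for nonamenable nearest-neighbour walks, but the genuinely technical point — is where the care is needed; alternatively one invokes~(\ref{Woess}) directly and bypasses this step.
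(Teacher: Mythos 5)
Your proposal follows essentially the same path as the paper: you identify $\lambda\circ H({\bf r}_0,\delta({\bf r}_0)/2)$ with the constrained maximum $\max_\tau\lambda\circ\gamma_{{\bf p}_0}(\tau)$ (which is exactly what the proof of Lemma \ref{maximum2} and the remark following it give), you get the lower bound $\lambda({\bf u}_0)\le\lambda_0({\bf p}_0)$ from the positive eigenfunction $x\mapsto j_{{\bf r}_0}(x,\xi)^{\delta({\bf r}_0)/2}$, and you go after the reverse inequality via first-passage generating functions $F_i(z)$ on ${\rm Cay}(F_n)$ — precisely the content of Theorem \ref{final2}. You also correctly flag that quoting \eqref{Woess} directly would be circular in the paper's ordering of results.

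The one place where your sketch is weaker than the paper's argument is the reverse inequality. You propose to locate the singularity of the $F_i$'s at the ``fold'' of the curve $\gamma_{{\bf p}_0}$ and infer a square-root singularity there; making this rigorous (that the branch with $F_i(0)=0$ really does trace out the $\tau\ge\tau_0$ arc, that there is no earlier singularity, and that the radius of convergence equals the fold point) needs real work, as you concede. The paper sidesteps all of this: by $\rho$-transience of nearest-neighbour random walk on a nonamenable group (Woess, Theorem 7.8), $G_{t}<\infty$ already at $t=\rho({\bf p}_0)$, so $u_i:=\sum_m f^{(m)}(e,a_i)t^{-m}$ is finite and one checks by the first-passage decomposition that ${\bf u}_1=(u_1,\ldots,u_n)\in(0,1)^n$ satisfies $c_i({\bf u}_1,{\bf p}_0)=1-t$ for all $i$. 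Lemma \ref{newformulation} then forces ${\bf p}({\bf u}_1)={\bf p}_0$ and $\lambda({\bf u}_1)=\lambda_0({\bf p}_0)$, and the constrained-maximum property immediately gives $\lambda_0({\bf p}_0)=\lambda({\bf u}_1)\le\lambda({\bf u}_0)$. This avoids analytic continuation entirely. So: right approach, but replace the singularity analysis by the $\rho$-transience evaluation of the Green function to close the argument cleanly.
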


\begin{proof} 
It is well known that 
(see e.g. Lemmas 7.2 and 7.6 in Woess \cite{MR1743100}) the
spectral radius $\rho({\bf p}_{0})$ of the Markov chain determined
by $A_{{\bf p}_{0}}$ is the minimum of eigenvalues for positive eigenfunctions $h$ on ${\rm Cay}(F_n)$. 
Since $\Delta_{{\bf p}_0}=I-A_{{\bf p}_0}$ and $\lambda_0({\bf p}_0)=1-\rho({\bf p}_0)$, we see that
\[
\lambda_0({\bf p}_{0})=\max\{\lambda \mid\exists h\geq0,\ \Delta_{{\bf p}_{0}}h=\lambda h\}.
\]
Let $H({\bf r}_0,\delta({\bf r}_0)/2)={\bf u}_0$.
By the definition of the function $\lambda$, 
the positive function $h(x)=j_{{\bf r}_{0}}(x,\xi)^{\delta({\bf r}_0)/2}$ for any $\xi \in \partial X_{{\bf r}_{0}}$
satisfies $\Delta_{{\bf p}_{0}}h=\lambda({\bf u}_{0})h$.
From this, we have $\lambda({\bf u}_{0}) \leq \lambda_0({\bf p}_{0})$.
Hence, the problem is to show the converse inequality.

By Lemma \ref{newformulation}, we
see that if some ${\bf u} \neq (1,\ldots,1)$ satisfies the simultaneous equations
$\lambda=c_{i}({\bf u},{\bf p})$ $(i=1,\ldots ,n)$ for given ${\bf p}={\bf p}_{0}$
and $\lambda=\lambda_{0}$, then ${\bf p}_{0}$ and $\lambda_{0}$ are represented as
${\bf p}({\bf u})$ and $\lambda({\bf u})$, respectively.
Theorem \ref{final2} below asserts that for $\lambda_{0}=\lambda_0({\bf p}_{0})$
there exists some ${\bf u}_1 \in (0,1)^n$ that satisfies these equations.
By the fact mentioned above, $\lambda_0({\bf p}_{0})$ is represented as
$\lambda({\bf u}_1)$ by using this ${\bf u}_1$,
which also satisfies  the condition ${\bf p}({\bf u}_1)={\bf p}_{0}$.

We consider the function $\lambda({\bf u})$ of variables
${\bf u} \in (0,1)^n$ under the constraint ${\bf p}({\bf u})={\bf p}_0$.
Then, by the proof of Lemma \ref{maximum2} (see also the remark after the proof), 
we have $\lambda({\bf u})\leq\lambda({\bf u}_{0})$.
This yields the desired inequality 
$\lambda_0({\bf p}_{0})\leq \lambda({\bf u}_{0})$, which completes the proof. 
\end{proof}

The arguments above imply formula  (\ref{Woess}).
We also note that if we assume (\ref{Woess}), then we can prove Theorem \ref{main2-b}
without showing Theorem \ref{final2}. To prove  (\ref{Woess}) we proceed as follows. By Theorem \ref{main2-b}, $\rho({\bf p}_0)$ is given by $1-\lambda({\bf u}_0)$, where
$\lambda({\bf u}_0)$ is the maximal value of $\lambda({\bf u})$ under the constraint
condition ${\bf p}({\bf u})={\bf p}_0$
by Lemma \ref{maximum2}. Proposition \ref{derivative} implies that this constraint maximum coincides with
$\max_{\tau \in (0,\infty)} \lambda \circ \gamma_{{\bf p}_0}(\tau)$.
Hence, we have 
\[
\rho({\bf p}_0)=1-\max_{\tau \in (0,\infty)} \lambda(\gamma_{{\bf p}_0}(\tau))
=\min_{\tau \in (0,\infty)} \frac{(1-\lambda(\gamma_{{\bf p}_0}(\tau)))\tau^{-1}}{\tau^{-1}},
\]
and by a short calculation using the formula 
\[
\lambda(\gamma_{{\bf p}_0}(\tau))=1-\sum_i\sqrt{\tau^2 +4p_i^2}+(n-1)\tau,
\]
the desired formula  (\ref{Woess}) follows.

We construct a solution ${\bf u} \in (0,1)^n$ of the equations $\lambda=c_i({\bf u},{\bf p})$ $(i=1,\ldots,n)$
for given ${\bf p} \in \mathcal P$ and $\lambda \geq 0$ in the following way.

\begin{theorem}\label{final2}
For a given ${\bf p} \in \mathcal P$, if $0 \leq \lambda \leq \lambda_0({\bf p})$, then
the simultaneous equations $\lambda=c_i({\bf u},{\bf p})$ $(i=1,\ldots,n)$ have
a solution ${\bf u}$ in $(0,1)^n$.
\end{theorem}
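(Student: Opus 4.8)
The plan is to reduce the simultaneous equations $\lambda = c_i(\mathbf{u},\mathbf{p})$ to the single equation $\lambda\circ\gamma_{\mathbf p}(\tau)=\lambda$ along the curve $\gamma_{\mathbf p}$ constructed in Section 3, and then to invoke the intermediate value theorem. Recall that $c_1(\mathbf u,\mathbf p)=\cdots=c_n(\mathbf u,\mathbf p)$ is equivalent, by \eqref{reduced-equation}, to $(u_1^{-1}-u_1)p_1=\cdots=(u_n^{-1}-u_n)p_n$, and setting this common value equal to $\tau\in(0,\infty)$ solves each $u_i=u_i(\tau)$ in $(0,1)$ via \eqref{u-function}, giving the smooth curve $\gamma_{\mathbf p}(\tau)=(u_1(\tau),\ldots,u_n(\tau))$. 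Along this curve the common value of the $c_i$ is, by Proposition \ref{derivative}, exactly $\lambda\circ\gamma_{\mathbf p}(\tau)=1-\sum_i\sqrt{\tau^2+4p_i^2}+(n-1)\tau$. So the system $\lambda=c_i(\mathbf u,\mathbf p)$ with $\mathbf u\in(0,1)^n$ has a solution precisely when there is some $\tau>0$ with $\lambda\circ\gamma_{\mathbf p}(\tau)=\lambda$.

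The first step is therefore to analyze the range of the function $\tau\mapsto\lambda\circ\gamma_{\mathbf p}(\tau)$ on $(0,\infty)$. By Proposition \ref{derivative} its derivative is strictly decreasing from $n-1>0$ to $-1<0$, so $\lambda\circ\gamma_{\mathbf p}$ increases on $(0,\tau_0)$ and decreases on $(\tau_0,\infty)$, where $\tau_0$ is the unique critical point; moreover $\lim_{\tau\to0}\lambda\circ\gamma_{\mathbf p}(\tau)=0$, $\lim_{\tau\to\infty}\lambda\circ\gamma_{\mathbf p}(\tau)=-\infty$, and the maximum value $\lambda\circ\gamma_{\mathbf p}(\tau_0)$ is positive. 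Hence the range of $\lambda\circ\gamma_{\mathbf p}$ on $(0,\infty)$ contains the interval $\bigl(0,\lambda\circ\gamma_{\mathbf p}(\tau_0)\bigr]$, and in fact contains $\bigl[0,\lambda\circ\gamma_{\mathbf p}(\tau_0)\bigr]$ if we allow the limiting behavior — but for $0<\lambda\le\lambda\circ\gamma_{\mathbf p}(\tau_0)$ there is certainly a genuine $\tau\in(0,\tau_0]$ with $\lambda\circ\gamma_{\mathbf p}(\tau)=\lambda$ by the intermediate value theorem, giving $\mathbf u=\gamma_{\mathbf p}(\tau)\in(0,1)^n$. The case $\lambda=0$ is handled separately: taking $\tau\to0$ is not allowed since that gives $\mathbf u\to(1,\ldots,1)\notin(0,1)^n$, but since $\lambda\circ\gamma_{\mathbf p}(\tau_0)>0$ we have $0$ strictly between $\lim_{\tau\to\infty}\lambda\circ\gamma_{\mathbf p}(\tau)=-\infty$ and $\lambda\circ\gamma_{\mathbf p}(\tau_0)>0$, so there is $\tau\in(\tau_0,\infty)$ with $\lambda\circ\gamma_{\mathbf p}(\tau)=0$, yielding a solution $\mathbf u\in(0,1)^n$.

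The second step is to identify the threshold: I must show $\lambda\circ\gamma_{\mathbf p}(\tau_0)\ge\lambda_0(\mathbf p)$, so that the hypothesis $0\le\lambda\le\lambda_0(\mathbf p)$ guarantees $\lambda\le\lambda\circ\gamma_{\mathbf p}(\tau_0)$ and the above construction applies. This is where Theorem \ref{main2-b} (equivalently Lemma \ref{maximum2} and the identification $\lambda_0(\mathbf p_0)=\lambda(\mathbf u_0)$ there) enters, but one must be careful about circularity: the proof of Theorem \ref{main2-b} as written invokes Theorem \ref{final2}. The way out is to use only the \emph{easy} direction established independently in the proof of Theorem \ref{main2-b}, namely that $h(x)=j_{\mathbf r_0}(x,\xi)^{\delta(\mathbf r_0)/2}$ is a positive eigenfunction of $\Delta_{\mathbf p_0}$ with eigenvalue $\lambda(\mathbf u_0)$, hence $\lambda(\mathbf u_0)\le\lambda_0(\mathbf p_0)=\max\{\lambda\mid\exists h\ge0,\ \Delta_{\mathbf p_0}h=\lambda h\}$; combined with Lemma \ref{maximum2} and Proposition \ref{derivative}, which identify $\lambda(\mathbf u_0)$ with $\max_{\tau}\lambda\circ\gamma_{\mathbf p_0}(\tau)=\lambda\circ\gamma_{\mathbf p_0}(\tau_0)$, this gives exactly $\lambda\circ\gamma_{\mathbf p}(\tau_0)\ge\lambda_0(\mathbf p)$ without circular dependence. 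The main obstacle is thus precisely this bookkeeping: making sure the inequality $\lambda\circ\gamma_{\mathbf p}(\tau_0)\ge\lambda_0(\mathbf p)$ is extracted only from the non-circular half of the surrounding results (the existence of the explicit positive eigenfunction and the characterization of $\lambda_0(\mathbf p)$ via positive eigenfunctions, both classical and cited to Woess), after which the theorem follows purely from one-variable calculus on $\lambda\circ\gamma_{\mathbf p}$.
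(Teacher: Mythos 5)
The reduction to the one-variable function $\tau\mapsto\lambda\circ\gamma_{\mathbf p}(\tau)$ and the intermediate-value argument are a sensible reorganization, and the description of that function's shape (increases from $0$ to a positive maximum at $\tau_0$, then decreases to $-\infty$) is correct. The problem is the second step, which is where the entire content of the theorem actually lives.

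You need $\lambda\circ\gamma_{\mathbf p}(\tau_0)\ge\lambda_0(\mathbf p)$, but the ``easy direction'' you invoke gives the \emph{opposite} inequality. Recall the characterization you cite from Woess: $\lambda_0(\mathbf p)=\max\{\lambda\mid\exists\,h>0,\ \Delta_{\mathbf p}h=\lambda h\}$. Exhibiting the explicit positive eigenfunction $h(x)=j_{\mathbf r_0}(x,\xi)^{\delta(\mathbf r_0)/2}$ with eigenvalue $\lambda(\mathbf u_0)=\lambda\circ\gamma_{\mathbf p}(\tau_0)$ therefore shows only that $\lambda\circ\gamma_{\mathbf p}(\tau_0)\le\lambda_0(\mathbf p)$, since a positive eigenfunction bounds $\lambda_0$ from \emph{below}, not above. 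Indeed every point on the curve $\gamma_{\mathbf p}$ gives a positive eigenfunction, so this reasoning only tells you $\lambda\circ\gamma_{\mathbf p}(\tau)\le\lambda_0(\mathbf p)$ for all $\tau$, which is useless here. What you need is precisely the converse: that $\lambda_0(\mathbf p)$ is already attained somewhere on the curve $\gamma_{\mathbf p}$, and that is exactly what the theorem you are trying to prove asserts at the endpoint $\lambda=\lambda_0(\mathbf p)$. So your argument is circular in exactly the way you were trying to avoid: the bookkeeping does not go through.

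The paper closes this gap with a genuinely different, non-formal input: for $t=1-\lambda\ge\rho(\mathbf p)$ the Green function $G_t$ converges (and remains finite at $t=\rho(\mathbf p)$ by $\rho$-transience on a tree), and the first-passage generating functions $u_i=\sum_m f^{(m)}(e,a_i)t^{-m}$ are shown, by the usual one-step decomposition, to satisfy the system $1-t=c_i(\mathbf u,\mathbf p)$; membership in $(0,1)^n$ is then checked. This probabilistic construction is the actual content of the theorem and cannot be replaced by calculus on $\lambda\circ\gamma_{\mathbf p}$ alone. (Alternatively one could simply quote formula \eqref{Woess}, whose equality $\lambda_0(\mathbf p)=\max_\tau\lambda\circ\gamma_{\mathbf p}(\tau)$ would make your IVT step work, but the paper explicitly treats \eqref{Woess} as an output of this theorem rather than an input, so that route would invert the paper's logical structure.)
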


\begin{proof} 
By $\lambda_0({\bf p})=1-\rho({\bf p})$, the condition 
$0 \leq \lambda \leq \lambda_0({\bf p})$ is equivalent to $\rho({\bf p}) \leq t \leq 1$
for $t:=1-\lambda$.
Since 
\[
\rho({\bf p})=\Vert A_{\bf p}\Vert=\lim_{m\rightarrow\infty}\Vert A_{\bf p}^{m}\Vert^{1/m}>0
\]
for the operator norm $\Vert A_{\bf p}\Vert$ of the transition matrix $A_{\bf p}=(p(x,y))_{x,y}$
of the Markov chain
acting on $L^2({\rm Cay}(F_n))$,
we have that the Green function 
\[
G_{t}(x,w):=\sum_{m=0}^{\infty}p^{m}(x,w)t^{-m}
\]
converges for every $t>\rho({\bf p})$  for all vertices $x,w \in {\rm Cay}(F_{n})$. Here, $p^m(x,y)$ denotes
the entry of $A^m_{\bf p}$. In fact, 
it is known that also $G_{t}<\infty$ if $t=\rho({\bf p})$ because the random walk determined by $A_{\bf p}$ on ${\rm Cay}(F_n)$ is
$\rho({\bf p})$-transient (see Theorem 7.8 in [Woe00]).

We denote by $f^{(m)}(e,g)$ the probability
that the random walk, starting at the group identity $e$, hits the
element $g$ after $m$ steps for the first time. Since $f^{(m)}(e,g)\le p^{m}(e,g)$
and $G_{t}<\infty$, we can define 
\[
u_{i}:=\sum_{m=0}^{\infty}f^{(m)}(e,a_{i})t^{-m} >0\quad(i=1,\ldots,n).
\]
Note that $f^{(0)}(e,a_{i})=0$. 

We first prove that ${\bf u}=(u_{1},\ldots,u_{n})$
defined as above satisfies
\[
1-t=c_i({\bf u},{\bf p}) \quad (i=1,\ldots,n).
\]
We write $u_i c_i({\bf u},{\bf p})$ for $i=1,\ldots,n$
as 
\[
p_{i}+u_{i}(p_{i}u_{i}+2\sum_{k\neq i}p_{k}u_{k})=t(t^{-1}p_{i}+u_{i}(t^{-1}p_{i}u_{i}+2t^{-1}\sum_{k\neq i}p_{k}u_{k})).
\]
Then, it suffices to show that 
\[
u_{i}=t^{-1}p_{i}+u_{i}(t^{-1}p_{i}u_{i}+2t^{-1}\sum_{k\neq i}p_{k}u_{k}).
\]
Decomposition of the event of ever hitting $a_{i}$ gives 
\begin{align*}
u_{i}=\sum_{m=1}^{\infty}f^{(m)}(e,a_{i})t^{-m} & =p(e,a_{i})t^{-1}+p(e,a_{i}^{-1})t^{-1}\sum_{m=1}^{\infty}f^{(m)}(a_{i}^{-1},e)t^{-m}\sum_{m=1}^{\infty}f^{(m)}(e,a_{i})t^{-m}\\
 & +\sum_{k\neq i}p(e,a_{k})t^{-1}\sum_{m=1}^{\infty}f^{(m)}(a_{k},e)t^{-m}\sum_{m=1}^{\infty}f^{(m)}(e,a_{i})t^{-m}\\
 & +\sum_{k\neq i}p(e,a_{k}^{-1})t^{-1}\sum_{m=1}^{\infty}f^{(m)}(a_{k}^{-1},e)t^{-m}\sum_{m=1}^{\infty}f^{(m)}(e,a_{i})t^{-m}.
\end{align*}
It follows that 
\[
u_{i}=p_{i}t^{-1}+p_{i}t^{-1}u_{i}u_{i}+2\sum_{k\neq i}p_{k}t^{-1}u_{k}u_{i}
\]
for each $i$. 

Finally, we verify that $\bf u$ is in $(0,1)^n$. In the case when $t=1$, we have that
\[
u_{i}:=\sum_{m=0}^{\infty}f^{(m)}(e,a_{i})  <1 \quad(i=1,\ldots,n)
\]
since the random walk is transient. If $t<1$, then we consider the original equations
$\lambda=c_i({\bf u},{\bf p})$ $(i=1,\ldots,n)$ for $\lambda>0$. By Lemma \ref{newformulation} (2),
we see that $\bf u$ satisfies $u_i<1$ for all $i$. Thus, we have ${\bf u} \in (0,1)^n$ in any case.
\end{proof}

\begin{remark}
Ledrappier \cite[Lemma 2.2]{MR1832436} considered a solution of equivalent equations 
to the above in the case when $t=1$. 
\end{remark}

By Lemma \ref{maximum2} and Theorem \ref{main2-b},
we obtain the theorem mentioned in the introduction.

\begin{proof}[Proof of Theorem \ref{main2}]
For a given ${\bf p} \in \mathcal P$, choose ${\bf r} \in \mathcal R$ as in Lemma \ref{maximum2}.
Then, Theorem \ref{main2-b} yields the assertion.
\end{proof}

\section{Generalization of the cogrowth formula}

We investigate the relationship between the Poincar\'e exponent and the bottom of the spectrum  of the Laplacian for a
subgroup $G \subset F_{n}$. 
For an edge length parameter ${\bf r} \in \mathcal R$,  we denote by 
$X_{\bf r}$  the Cayley graph ${\rm Cay}(F_{n})$ with the distance $d_{\bf r}$ as before.
Since $G$ acts on $X_{\bf r}$
isometrically, discontinuously and freely, we obtain the quotient graph $G \backslash {\rm Cay}(F_n)$
endowed with the metric induced by $d_{\bf r}$. 
We use an appropriate weight ${\bf p} \in \mathcal P$ to consider the Laplacian $\Delta_{\bf p}$ on $G \backslash {\rm Cay}(F_n)$.
By the facts shown in the previous sections, we see that the weight ${\bf p}$ can be given not only in terms of 
${\bf r}$ but also depending
on the dimension $s=\delta_G({\bf r})$ of a subgroup $G \subset F_n$.

We will prove Theorem \ref{main3}
by dividing it into two cases according to formula (\ref{eps}).
The first case follows from  the following claim, which is the main part of the cogrowth formula.

\begin{theorem}\label{firstline}
For any subgroup $G\subset F_{n}$ and any ${\bf r}\in {\mathcal R}$,
if $\delta_{G}({\bf r})> \delta({\bf r})/2$, then
the bottom of the spectrum  $\lambda_0^{G}({\bf p})$ of the Laplacian $\Delta_{{\bf p}}$
for ${\bf p}={\bf p}\circ H({\bf r},\delta_{G}({\bf r}))$
on the quotient graph $G \backslash {\rm Cay}(F_n)$ coincides with
$\lambda \circ H({\bf r},\delta_{G}({\bf r}))$. 
\end{theorem}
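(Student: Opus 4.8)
The plan is to prove the two inequalities $\lambda_0^G({\bf p}) \le \lambda\circ H({\bf r},\delta_G({\bf r}))$ and $\lambda_0^G({\bf p}) \ge \lambda\circ H({\bf r},\delta_G({\bf r}))$ separately, writing $s := \delta_G({\bf r})$, ${\bf u}_0 := H({\bf r},s)$, and $\lambda_0 := \lambda({\bf u}_0)$. For the upper bound, the plan is to produce a genuine $L^2$-eigenfunction (or at least an $L^2$-Weyl sequence) on $G\backslash{\rm Cay}(F_n)$ with eigenvalue $\lambda_0$. The natural candidate is the total mass function of the Patterson measure: let $\mu = \{\mu_x\}$ be a $G$-invariant conformal measure of dimension $s = \delta_G({\bf r})$ (which exists by Theorem \ref{patterson}), and set
\[
\varphi(x) = \int_{\partial X_{\bf r}} j_{\bf r}(x,\xi)^{s}\, d\mu_o(\xi).
\]
The $G$-invariance of $\mu$ makes $\varphi$ descend to $G\backslash{\rm Cay}(F_n)$, and the pointwise eigenrelation $\Delta_{\bf p} j_{\bf r}(x,\xi)^s = \lambda_0\, j_{\bf r}(x,\xi)^s$ (valid because ${\bf p}={\bf p}\circ H({\bf r},s)$ is exactly the weight for which all $c_i({\bf u}_0,{\bf p})$ equal the common value $\lambda_0$, by Lemma \ref{newformulation}) integrates to $\Delta_{\bf p}\varphi = \lambda_0\varphi$. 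The key point is then an $L^2$-estimate: since $\delta_G({\bf r}) > \delta({\bf r})/2$, the $s$-th power $j_{\bf r}(x,\xi)^s$ decays fast enough that $\varphi$ (or suitable truncations of it on balls of radius $R$ in the quotient) has finite, controlled $L^2$-norm. This is where the hypothesis $\delta_G({\bf r})>\delta({\bf r})/2$ enters decisively, mirroring the role of $\delta_G > \tfrac12\log(2n-1)$ in the classical cogrowth formula: it is precisely the threshold past which the conformal density is square-summable against the counting measure on orbits. Concretely, I expect to bound $\sum_{x\in G\backslash{\rm Cay}(F_n)} \varphi(x)^2$ (weighted by the edge-length volume, or unweighted since we are on the combinatorial graph) by a double integral $\iint (j_{\bf r}(x,\xi) j_{\bf r}(x,\eta))^s\,d\mu_o(\xi)\,d\mu_o(\eta)$ and use a shadow-lemma / Sullivan-type estimate on the conformal measure together with $\sum_x j_{\bf r}(o,x)^{?}$ convergence; the relevant series converges exactly when $2s > \delta({\bf r})$. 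This yields $\lambda_0^G({\bf p}) \le \lambda_0 = \lambda\circ H({\bf r},s)$.

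For the lower bound, the plan is the standard positive-eigenfunction comparison, now applied downstairs. One has $\lambda_0^G({\bf p}) = 1 - \rho^G({\bf p})$ where $\rho^G({\bf p})$ is the operator norm (= spectral radius) of $A_{\bf p}$ on $L^2(G\backslash{\rm Cay}(F_n))$, and by the usual variational/positive-solution characterization (as invoked in the proof of Theorem \ref{main2-b} via Woess, Lemmas 7.2 and 7.6), $\lambda_0^G({\bf p})$ is the largest $\lambda$ for which there is a positive $h$ on $G\backslash{\rm Cay}(F_n)$ with $\Delta_{\bf p}h = \lambda h$. The function $\varphi$ constructed above is such a positive solution with $\lambda = \lambda_0$, so $\lambda_0^G({\bf p}) \ge \lambda_0$ — provided the positive-solution characterization is available in the quotient setting, which it is because $G\backslash{\rm Cay}(F_n)$ is a connected graph of bounded degree and $A_{\bf p}$ has nonnegative entries (so one can lift to ${\rm Cay}(F_n)$, where $j_{\bf r}(\cdot,\xi)^s$ is a positive $A_{\bf p}$-superharmonic-type function realizing the eigenvalue, and push the inequality back down). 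Alternatively, and perhaps more robustly, one invokes that $\rho^G({\bf p}) \le \rho^{F_n}_{\text{cover}}$-type monotonicity is the \emph{wrong} direction; instead I would directly test: for any $\varepsilon>0$, truncations $\varphi_R$ of $\varphi$ to radius-$R$ balls satisfy $\langle A_{\bf p}\varphi_R,\varphi_R\rangle/\langle\varphi_R,\varphi_R\rangle \to \rho_0 := 1-\lambda_0$ as $R\to\infty$, giving $\rho^G({\bf p}) \ge \rho_0$, hence $\lambda_0^G({\bf p}) \le \lambda_0$ — wait, that reproduces the upper bound. So the lower bound genuinely needs the positive-solution theorem: a positive $\Delta_{\bf p}$-eigenfunction with eigenvalue $\lambda_0$ forces $\rho^G({\bf p}) \le \rho_0$, i.e. $\lambda_0^G({\bf p}) \ge \lambda_0$, since the spectral radius cannot exceed the eigenvalue of any positive eigenfunction (this is exactly the Woess Lemma 7.2/7.6 statement, which holds for the random walk on any graph, including the quotient).

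Combining the two bounds gives $\lambda_0^G({\bf p}) = \lambda_0 = \lambda\circ H({\bf r},\delta_G({\bf r}))$. The main obstacle I anticipate is the $L^2$-integrability estimate for $\varphi$ in the upper-bound half: one must show that the conformal density of dimension $s > \delta({\bf r})/2$, integrated and then squared and summed over the vertices of the quotient, converges, and this requires either a Coornaert-type shadow lemma controlling $\mu_o$ of shadows of balls in $X_{\bf r}$, or an explicit computation exploiting the tree structure (where shadows are just cylinder sets and the conformal measure of a cylinder over a reduced word $w$ is comparable to $j_{\bf r}(o, w)^{s} = (\prod r_{i_k})^{s}$). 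On a tree this should reduce to the convergence of a weighted orbit-counting series of the type appearing in Theorem \ref{main1}, namely $\sum_{g} (\,\text{product of } r_i\text{'s along } g)^{2s}$, which converges precisely because $2s > \delta({\bf r})$; making the bookkeeping between the quotient vertices, the $G$-orbit structure, and these cylinder masses precise — in particular handling the non-finitely-generated case where uniqueness of the Patterson measure may fail — is the delicate point. A secondary technical issue is justifying that $\varphi$ is not merely a formal eigenfunction but lies in a domain where $\langle\Delta_{\bf p}\varphi,\varphi\rangle$ computes correctly; truncation to finite balls plus a dominated-convergence argument, using the exponential decay margin supplied by $2s-\delta({\bf r})>0$, should close this gap.
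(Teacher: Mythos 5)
Your overall strategy is the same as the paper's: take the total mass function $\varphi_\mu(x)=\int j_{\bf r}(x,\xi)^{\delta_G({\bf r})}d\mu_o(\xi)$ of a $G$-invariant Patterson measure, observe it descends to a positive $\Delta_{\bf p}$-eigenfunction on the quotient with eigenvalue $\lambda\circ H({\bf r},\delta_G({\bf r}))$, use the Woess positive-eigenfunction characterization (that $\lambda_0^G({\bf p})$ is the \emph{maximum} eigenvalue admitting a positive eigenfunction) to get $\lambda\circ H\leq\lambda_0^G({\bf p})$, and then prove $\varphi_\mu\in L^2(G\backslash{\rm Cay}(F_n))$ to get the reverse inequality. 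On the $L^2$ estimate, however, the paper does not use a shadow lemma or a double-integral computation. Instead, for finitely generated $G$ it exploits convex cocompactness directly: the quotient is a finite core with finitely many rooted regular trees attached, and for $x$ in a lifted tree $\widetilde T_i$ and $\xi$ a limit point, the nearest point of $x$ on $[o,\xi)$ stays in a bounded set (on the segment $[o,x_i]$), so $j(x,\xi)\leq C_i e^{-d_{\bf r}(x_i,x)}$ uniformly and hence $\varphi_\mu(x)\leq C_i e^{-\delta_G({\bf r})d_{\bf r}(x_i,x)}$. Square-summability then reduces to comparing the rate $2\delta_G({\bf r})$ against the vertex growth $e^{(\delta({\bf r})+\varepsilon)t}$ of a regular tree, which is exactly where $\delta_G({\bf r})>\delta({\bf r})/2$ enters. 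This is more elementary than the shadow-lemma route you sketch, though the latter is plausible for finitely generated $G$.

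The genuine gap in your proposal is the infinitely generated case, which you essentially do not address. For such $G$, convex cocompactness fails, the quotient graph has no finite core-plus-ends structure, and there is no reason for $\varphi_\mu$ to lie in $L^2$ (nor would a shadow lemma be readily available). The paper devotes the second half of the proof to this: it exhausts $G$ by finitely generated subgroups $G_1\subset G_2\subset\cdots$ with $\bigcup_k G_k=G$, shows $\delta_{G_k}({\bf r})\to\delta_G({\bf r})$ by extracting a weak-$\ast$ limit of normalized Patterson measures $\mu_k$ and appealing to the lower bound on the dimension of $G$-invariant conformal measures, and then uses continuity of ${\bf p}\circ H({\bf r},\cdot)$, $\lambda\circ H({\bf r},\cdot)$, and of ${\bf p}\mapsto\lambda_0^G({\bf p})$, together with the monotonicity $\lambda_0^{G_k}({\bf p}_k)\geq\lambda_0^G({\bf p}_k)$ obtained by lifting positive eigenfunctions along the covering $G_k\backslash{\rm Cay}(F_n)\to G\backslash{\rm Cay}(F_n)$, to pass to the limit. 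Your remark that uniqueness of the Patterson measure ``may fail'' for infinitely generated $G$ misidentifies the difficulty: non-uniqueness is harmless here (any $G$-invariant conformal density of dimension $\delta_G({\bf r})$ produces an eigenfunction); the problem is the failure of the $L^2$ bound, which is what the exhaustion argument is built to circumvent.
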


\begin{proof}
Let  $\mu=\{\mu_{x}\}_{x\in X_{{\bf r}}}$ be a  Patterson measure for $G$. Consider the positive $G$-invariant 
total mass function
$$
\varphi_\mu(x)=\int_{\partial X_{\bf r}} d\mu_x=\int_{\partial X_{\bf r}} j(x,\xi)^{\delta_G({\bf r})} d\mu_o(\xi)
\quad (x \in X_{\bf r}).
$$
For ${\bf p}={\bf p}\circ H({\bf r},\delta_G({\bf r}))$, we have
\begin{align*}
\Delta_{\bf p} \varphi_\mu(x)&=\int_{\partial X_{\bf r}} \Delta_{\bf p} j(x,\xi)^{\delta_G({\bf r})} d\mu_o(\xi)\\
&=\int_{\partial X_{\bf r}} \lambda \circ H({\bf r},\delta_G({\bf r})) j(x,\xi)^{\delta_G({\bf r})} d\mu_o(\xi)
=\lambda \circ H({\bf r},\delta_G({\bf r}))\varphi_\mu(x).
\end{align*}
Therefore, $\varphi_\mu$ descends to a positive eigenfunction of $\Delta_{\bf p}$ on $G \backslash X_{\bf r}$
with the eigenvalue $\lambda \circ H({\bf r},\delta_G({\bf r}))$. Since $\lambda_0^G({\bf p})$ 
is known to be the maximum of such eigenvalues,
we conclude that $\lambda \circ H({\bf r},\delta_G({\bf r})) \leq \lambda_0^G({\bf p})$.

For the converse inequality, we first assume that $G$ is finitely generated and
show that $\varphi_\mu \in L^2(G \backslash {\rm Cay}(F_n))$.
Since $G$ is convex cocompact, the quotient graph $G \backslash {\rm Cay}(F_n)$ consists of a finite core graph $C_G$ to which
a finite number of rooted regular trees $(T_i,\hat x_i)$ $(i=1,\ldots,m)$ of valency $2n$ (the valency at $\hat x_i$
is 1) are  attached. Let $(\widetilde T_i,x_i)$ be a connected component of the inverse image of $(T_i,\hat x_i)$ under the 
quotient map $X_{\bf r} \to G \backslash {\rm Cay}(F_n)$. We note that the restriction of the quotient map to $(\widetilde T_i,x_i)$
is an isometry onto $(T_i,\hat x_i)$.
It suffices to show that $\varphi_\mu$ is square integrable on each $\widetilde T_i$.

We estimate $j(x,\xi)$ for $x \in \widetilde T_i$ by representing it as
\[
j(x,\xi)=\exp(-b_\xi(x))=\exp\{2d_{\bf r}(o,y_i)-d_{\bf r}(o,x)\},
\]
where $y_i$ is the nearest point from $x$ to the geodesic ray $[o,\xi)$. 
We may assume that the projection of the base point $o$ is in $C_G$.
If $\xi$ is a limit point of $G$, 
then the projection of the geodesic ray $[o,\xi)$ is in $C_G$, from which we see that
$y_i$ is on the geodesic segment $[o,x_i]$. In particular, there is some $C_i>0$ such that
$\exp(2d_{\bf r}(o,y_i) )\leq C_i$ 
for every limit point $\xi\in \partial X_{\bf r}$ and for every $x \in \widetilde T_i$. 

The above estimate of $j(x,\xi)$ implies that 
\[
\varphi_\mu(x) \leq C_i e^{-\delta_G({\bf r}) d_{\bf r}(x_i,x)} \quad (x \in \widetilde{T_i})
\]
for each $i$.
Then, we obtain that
\[
\sum_{x \in \widetilde T_i} \varphi_\mu(x)^2 \leq C_i^2 \lim_{R \to \infty} \int_0^R e^{-2\delta_G({\bf r})t} d N(t) \quad\quad (i=1,\ldots,m)
\]
for $N(t):=\#\{x \in \widetilde T_i \mid d_{\bf r}(x_i,x) \leq t\}$. 

We choose some $\varepsilon >0$ such that $2\delta_G({\bf r}) \geq \delta({\bf r})+2\varepsilon$.
Since $N(t) \leq D e^{(\delta({\bf r})+\varepsilon) t}$ for some constant $D>0$, we see that
\begin{align*}
\int_0^R e^{-2\delta_G({\bf r})t} d N(t)&=e^{-2\delta_G({\bf r})R}N(R)+2\delta_G({\bf r}) \int_0^R e^{-2\delta_G({\bf r})t}N(t) dt\\
&\leq D \left(e^{-\varepsilon R}+2\delta_G({\bf r}) \int_0^R e^{-\varepsilon t}dt \right),
\end{align*}
which has a finite limit as $R \to \infty$. 
This implies that $\varphi_\mu$ is square integrable on $G \backslash {\rm Cay}(F_n)$,
and hence the eigenvalue $\lambda \circ H({\bf r},\delta_G({\bf r}))$ for $\Delta_{\bf p}$
is not less than $\lambda_0^G({\bf p})$.
Thus, we obtain that $\lambda \circ H({\bf r},\delta_G({\bf r})) = \lambda_0^G({\bf p})$ 
for a finitely generated subgroup $G \subset F_n$.

For an infinitely generated subgroup $G$, we choose an exhaustion by a sequence of finitely generated subgroups $G_k$
such that
\[
G_1 \subset G_2 \subset \cdots \subset \bigcup_k G_k=G. 
\] 
In this case, clearly $\delta_{G_1}({\bf r}) \leq \delta_{G_2}({\bf r}) \leq \cdots \leq \delta_{G_k}({\bf r}) \leq \delta_{G}({\bf r})$,
from which we can verify that
$\lim_{k \to \infty}\delta_{G_k}({\bf r}) = \delta_{G}({\bf r})$.
Indeed, we take the Patterson measure $\mu_k$ for $G_k$
with the normalization $\mu_k(\partial X_{\bf r})=1$. Then, $\{\mu_k\}$ has a subsequence that converges to
a probability measure $\mu$ on $\partial X_{\bf r}$ in the weak-$\ast$ sense. 
Note that $\delta:=\lim_{k \to \infty} \delta_{G_k}({\bf r})$ exists, which is bounded from above by $\delta_G({\bf r})$.
It is easy to see that $\mu$ is a $G$-invariant conformal measure
of dimension $\delta$. Since the dimension of any $G$-invariant conformal measure is not less than $\delta_G({\bf r})$ (see [Coo93]),
we have that $\delta \geq \delta_G({\bf r})$. Hence $\delta=\delta_G({\bf r})$.

Since $\lambda \circ H({\bf r}, \cdot)$ is continuous, we have that 
$\lim_{k \to \infty}\lambda \circ H({\bf r},\delta_{G_k}({\bf r}))=\lambda \circ H({\bf r},\delta_{G}({\bf r}))$.
Similarly, ${\bf p}_k:={\bf p}\circ H({\bf r},\delta_{G_k}({\bf r}))$ converges to ${\bf p}={\bf p}\circ H({\bf r},\delta_{G}({\bf r}))$
by the continuity of ${\bf p}\circ H({\bf r},\cdot)$. Moreover, if the weights ${\bf p}_k$ 
of the Laplacian on the graph $G \backslash X_{\bf r}$ converge to ${\bf p}$, then the bottom of the spectra 
$\lambda_{0}^G({\bf p}_k)$ converge to $\lambda_{0}^G({\bf p})$ as $k \to \infty$.
Indeed, for the inner product $\langle \cdot,\cdot \rangle$ on $L^2(G \backslash {\rm Cay}(F_n))$,
we have that $\langle \Delta_{{\bf p}_k}f,f \rangle$ converges to $\langle \Delta_{{\bf p}}f,f \rangle$
uniformly for all $f \in L^2(G \backslash {\rm Cay}(F_n))$ with $\langle f,f \rangle=1$.
On the other hand, by lifting positive eigenfunctions on $G \backslash {\rm Cay}(F_n)$ to $G_n \backslash {\rm Cay}(F_n)$,
we easily see that $\lambda_0^{G_k}({\bf p}_k) \geq \lambda_{0}^G({\bf p}_k)$.
Since $\lambda_0^{G_k}({\bf p}_k)=\lambda \circ H({\bf r},\delta_{G_k}({\bf r}))$ 
by the above arguments for finitely generated subgroups $G_k$,
we conclude that $\lambda \circ H({\bf r},\delta_{G}({\bf r})) \geq \lambda_{0}^G({\bf p})$.
\end{proof}

On the other hand, by Theorem \ref{main2-b}
obtained in the previous section, 
we can say that the proper weight of the Laplacian $\Delta_{\bf p}$ on ${\rm Cay}(F_n)$ 
in the case of $G=\{\rm id\}$ is
${\bf p}={\bf p}\circ H({\bf r},\delta({\bf r})/2)$. In the next theorem,
we show that this result can be generalized for any $G$ with $\delta_{G}({\bf r}) \leq \delta({\bf r})/2$.

\begin{theorem}\label{secondline}
For any subgroup $G\subset F_{n}$ and any ${\bf r}\in {\mathcal R}$,
if $\delta_{G}({\bf r}) \leq \delta({\bf r})/2$, then
the bottom of the spectrum $\lambda_0^{G}({\bf p})$ of the Laplacian $\Delta_{{\bf p}}$
for ${\bf p}={\bf p}\circ H({\bf r},\delta({\bf r})/2)$
on the quotient graph $G \backslash {\rm Cay}(F_n)$ coincides with
$\lambda \circ H({\bf r},\delta({\bf r})/2)$. 
\end{theorem}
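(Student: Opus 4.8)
The plan is to show that, for the weight ${\bf p}:={\bf p}\circ H({\bf r},\delta({\bf r})/2)$ appearing in the statement, the spectral radii of the ${\bf p}$-walk on the tree and on the quotient coincide, $\rho^G({\bf p})=\rho({\bf p})$; since $\lambda_0^G({\bf p})=1-\rho^G({\bf p})$ and, by Theorem \ref{main2-b}, $\lambda_0({\bf p})=1-\rho({\bf p})=\lambda_0^{\{\rm id\}}({\bf p})=\lambda\circ H({\bf r},\delta({\bf r})/2)$, this is exactly the asserted equality. One inequality is immediate from the covering $\pi\colon{\rm Cay}(F_n)\to G\backslash{\rm Cay}(F_n)$: the lift starting at $o$ of a closed walk of length $m$ at $\bar o$ terminates at some $g(o)$ with $g\in G$, so $p^{(m)}(\bar o,\bar o)=\sum_{g\in G}p^{(m)}(o,g(o))\ge p^{(m)}(o,o)$, whence $\rho^G({\bf p})\ge\rho({\bf p})$ and therefore $\lambda_0^G({\bf p})\le\lambda\circ H({\bf r},\delta({\bf r})/2)$.

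For the reverse inequality I would estimate the Green function $G^G_t(\bar o,\bar o)=\sum_{m\ge0}p^{(m)}(\bar o,\bar o)t^{-m}$ of the ${\bf p}$-walk on $G\backslash{\rm Cay}(F_n)$ and prove that it is finite for every $t\in(\rho({\bf p}),1)$; this forces $\rho^G({\bf p})\le\rho({\bf p})$, because (using $0\le p^{(m)}(\bar o,\bar o)\le1$) the power series $\sum_m p^{(m)}(\bar o,\bar o)z^m$ then converges on $|z|<1/\rho({\bf p})$, so its radius of convergence, which equals $1/\rho^G({\bf p})$ as the spectral radius of a reversible walk on a connected graph is the exponential decay rate of its return probabilities (cf. Woess \cite{MR1743100}), is at least $1/\rho({\bf p})$. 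By the covering relation, $G^G_t(\bar o,\bar o)=\sum_{g\in G}G_t(o,g(o))$ with $G_t$ the Green function on ${\rm Cay}(F_n)$; and because ${\rm Cay}(F_n)$ is a tree and the ${\bf p}$-walk is invariant under the left-translation action of $F_n$, the first-passage generating function $F_t(x,y)=\sum_{m\ge0}f^{(m)}(x,y)t^{-m}$ is multiplicative along geodesics, $F_t(x,xa_i^{\pm1})=F_t(e,a_i)=:u_i(t)$ depends only on the label, and $G_t(o,g(o))=F_t(o,g(o))\,G_t(o,o)$. Hence, writing $n_j(g)$ for the number of occurrences of $a_j^{\pm1}$ in the reduced word of $g$, we have $F_t(o,g(o))=\prod_j u_j(t)^{n_j(g)}$ while $d_{\bf r}(o,g(o))=\sum_j n_j(g)(-\log r_j)$.

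The decisive step is to evaluate $u_i$ at $t=\rho({\bf p})=1-\lambda_0({\bf p})$. By Theorem \ref{final2} (and its proof), the equations $\lambda_0({\bf p})=c_i({\bf u},{\bf p})$ $(i=1,\dots,n)$ have a solution ${\bf u}\in(0,1)^n$ with $u_i=\sum_{m\ge0}f^{(m)}(e,a_i)\rho({\bf p})^{-m}=F_{\rho({\bf p})}(e,a_i)$; by the uniqueness argument in the proof of Theorem \ref{main2-b} this solution is necessarily ${\bf u}_0=H({\bf r},\delta({\bf r})/2)$, i.e.\ $u_i(\rho({\bf p}))=r_i^{\delta({\bf r})/2}$. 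Writing $u_j(t)=r_j^{\theta_j(t)}$, so that $\theta_j(\rho({\bf p}))=\delta({\bf r})/2$, and using that $t\mapsto F_t(e,a_j)$ is strictly decreasing (hence $\theta_j$ strictly increasing), we obtain $\theta_j(t)>\delta({\bf r})/2\ge\delta_G({\bf r})$ for every $t\in(\rho({\bf p}),1)$. Therefore
\[
F_t(o,g(o))=\exp\Bigl(-\sum_j\theta_j(t)\,n_j(g)\,(-\log r_j)\Bigr)\le\exp\bigl(-s(t)\,d_{\bf r}(o,g(o))\bigr),\qquad s(t):=\min_j\theta_j(t)>\delta_G({\bf r}),
\]
and since $s(t)$ strictly exceeds the abscissa of convergence $\delta_G({\bf r})$ of the Poincar\'e series of $G$, the sum $\sum_{g\in G}e^{-s(t)d_{\bf r}(o,g(o))}$ converges. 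Consequently $G^G_t(\bar o,\bar o)=G_t(o,o)\sum_{g\in G}F_t(o,g(o))<\infty$ for every $t\in(\rho({\bf p}),1)$ (with $G_t(o,o)<\infty$ there since $\limsup_m p^{(m)}(o,o)^{1/m}=\rho({\bf p})$), as required; combining this with the first inequality gives $\rho^G({\bf p})=\rho({\bf p})$ and hence $\lambda_0^G({\bf p})=\lambda\circ H({\bf r},\delta({\bf r})/2)$.

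The point that I expect to require the most care is exactly the identification of $F_{\rho({\bf p})}(e,a_i)$ with the combinatorial data ${\bf u}_0=H({\bf r},\delta({\bf r})/2)$, together with the resulting \emph{strict} exponent gain $s(t)>\delta_G({\bf r})$: it is here, and only here, that the hypothesis $\delta_G({\bf r})\le\delta({\bf r})/2$ is used, namely to force the $G$-Poincar\'e series at exponent $s(t)$ to converge. I note that, in contrast with the case $\delta_G({\bf r})>\delta({\bf r})/2$ treated in Theorem \ref{firstline}, this argument requires neither finite generation of $G$ nor an exhaustion by finitely generated subgroups, since $\sum_{g\in G}e^{-s\,d_{\bf r}(o,g(o))}<\infty$ for $s>\delta_G({\bf r})$ holds for arbitrary subgroups $G\subset F_n$.
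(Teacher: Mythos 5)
Your proof is correct, and it takes a genuinely different route from the paper's. The paper's argument for the inequality $\lambda_0^G(\mathbf{p}) \ge \lambda\circ H(\mathbf{r},\delta(\mathbf{r})/2)$ proceeds by producing a $G$-invariant conformal measure of dimension $\delta(\mathbf{r})/2$ (a Patterson measure if $\delta_G(\mathbf{r})=\delta(\mathbf{r})/2$, and an Anderson--Falk--Tukia-type ending measure if $\delta_G(\mathbf{r})<\delta(\mathbf{r})/2$, the latter requiring the convergence of the Poincar\'e series at exponent $\delta(\mathbf{r})/2>\delta_G(\mathbf{r})$), and then reads off the total mass function as a positive $G$-invariant eigenfunction with eigenvalue $\lambda\circ H(\mathbf{r},\delta(\mathbf{r})/2)$; the reverse inequality comes from lifting positive eigenfunctions. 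You instead prove $\rho^G(\mathbf{p})\le\rho(\mathbf{p})$ directly by estimating the quotient Green function via the first-passage decomposition $G_t(o,g(o))=F_t(o,g(o))G_t(o,o)$ on the tree, identifying $F_{\rho(\mathbf{p})}(e,a_i)=r_i^{\delta(\mathbf{r})/2}$ through the solution constructed in Theorem~\ref{final2} together with the uniqueness of the constrained maximizer in Proposition~\ref{derivative}, and exploiting the strict monotonicity of $t\mapsto F_t$ to obtain the exponent gain $s(t)>\delta(\mathbf{r})/2\ge\delta_G(\mathbf{r})$. The two arguments use the hypothesis $\delta_G(\mathbf{r})\le\delta(\mathbf{r})/2$ in the same way (to make a $G$-Poincar\'e-type series converge), but your version trades the conformal-measure construction for elementary random-walk generating functions; it is thus more self-contained, at the cost of sitting further from the Patterson--Sullivan framework that organizes the rest of the paper. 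One point worth spelling out if you write this up: the identification $\mathbf{u}_1=\mathbf{u}_0$ that you attribute to ``the uniqueness argument in the proof of Theorem~\ref{main2-b}'' is not stated there verbatim; it follows because both $\mathbf{u}_0$ and $\mathbf{u}_1$ lie on the curve $\gamma_{\mathbf{p}_0}$ and achieve the maximum $\lambda_0(\mathbf{p}_0)$ of $\lambda\circ\gamma_{\mathbf{p}_0}$, and Proposition~\ref{derivative} asserts that this maximum is attained at a unique $\tau_0$.
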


\begin{proof}
We take a $G$-invariant conformal measure $\mu=\{\mu_{x}\}_{x\in X_{{\bf r}}}$ of dimension $\delta({\bf r})/2$,
which is not less than $\delta_G({\bf r})$ by assumption. In the case where $\delta({\bf r})/2=\delta_G({\bf r})$,
we just take a Patterson measure $\mu$ for $G$ by Theorem \ref{patterson}.
In the case where $\delta({\bf r})/2>\delta_G({\bf r})$,
the existence of such a measure $\mu$ can be seen as follows.
We consider the sum of weighted Dirac masses 
\[
\mu_{x,y}=\frac{1}{\sum_{g \in G}e^{-s d_{\bf r}(x,gy)}} \sum_{g \in G} e^{-s d_{\bf r}(x,gy)}{\bf 1}_{gy}
\]
for any vertices $x,y \in X_{\bf r}$ and 
for $s=\delta({\bf r})/2$. Note that the Poincar\'e series $\sum_{g \in G}e^{-s d_{\bf r}(x,gy)}$
converges if $s>\delta_G({\bf r})$.
Since $G$ is not cocompact, we can choose
a sequence $y_k \in X_{\bf r}$ within a fundamental domain of $G$ that converges to a point at infinity.
Then, a subsequence of $\{\mu_{x,y_k}\}$ converges to some $G$-invariant conformal measure $\{\mu_x\}$
of dimension $s$
in the weak-$\ast$ sense. This is a modification of the construction of ending measures for Kleinian groups
by Anderson, Falk and Tukia \cite{MR2305045}.

We consider the positive $G$-invariant 
total mass function
$$
\varphi_\mu(x)=\int_{\partial X_{\bf r}} d\mu_x=\int_{\partial X_{\bf r}} j(x,\xi)^{\delta({\bf r})/2} d\mu_o(\xi)
\quad (x \in X_{\bf r}).
$$
For ${\bf p}={\bf p} \circ H({\bf r},\delta({\bf r})/2)$, this satisfies 
$\Delta_{\bf p} \varphi_\mu=\lambda \circ H({\bf r},\delta({\bf r})/2)\varphi_\mu$.
Thus, we obtain a positive eigenfunction function for $\Delta_{\bf p}$ on $G \backslash {\rm Cay}(F_n)$
with eigenvalue $\lambda \circ H({\bf r},\delta({\bf r})/2)$. This implies that
$\lambda_0^G({\bf p}) \geq \lambda \circ H({\bf r},\delta({\bf r})/2)$.

On the other hand, $\lambda_0({\bf p}) = \lambda \circ H({\bf r},\delta({\bf r})/2)$ by Theorem \ref{main2}.
Since any positive eigenfunction for $\Delta_{\bf p}$ on $G \backslash {\rm Cay}(F_n)$ can be lifted to ${\rm Cay}(F_n)$,
we see that $\lambda_0({\bf p}) \geq \lambda_0^G({\bf p})$. This concludes that
$\lambda_0^G({\bf p}) = \lambda \circ H({\bf r},\delta({\bf r})/2)$.
\end{proof}

\begin{proof}[Proof of Theorem \ref{main3}]
By Theorems \ref{firstline} and \ref{secondline} with the definition of the 
appropriate weight ${\bf p}_{*}({\bf r},s)$ for the Laplacian, we immediately have
the result.
\end{proof}

We note that Theorem \ref{main3} for the case of ${\bf r}=(1/2n,\ldots,1/2n)$
implies the original Grigorchuk cogrowth formula. In other words, Theorem \ref{cogrowth} can be
obtained as a corollary to Theorem \ref{main3}.

\end{document}